\newtheorem{theorem}{\bf Theorem}
\newtheorem{Remark}{\bf Remark}
\newtheorem{lemma}{\bf Lemma}
\begin{document}
\title{\textbf{\large Existence and uniqueness theorems for one class of
Hammerstein-type nonlinear integral equations}}
	
	\author{Zahra Keyshams, Khachatur A. Khachatryan, Monire Mikaeili Nia}
	\date{}
    \maketitle
\textbf{Abstract:} The class of nonlinear integral equations on the positive half-line with a monotone operator of
Hammerstein type is studied. With various partial representations of the corresponding kernel and
nonlinearity, this class of equations has applications in the dynamic theory of $p$-adic strings, in the
kinetic theory of gases, in the theory of radiation transfer and in the mathematical theory of the
geographical spread of epidemic diseases. A constructive theorem for the existence of a nontrivial
bounded solution is proved. The asymptotic behavior of the constructed solution at infinity is studied.
We also prove a theorem for the uniqueness of a solution in the class of nonnegative nontrivial and
bounded functions. At the end of the work, specific particular examples of the kernel and nonlinearity
of this class of equations are given, which are of independent interest.

\textbf{Key words and phrases:} nonlinearity, monotonicity, bounded solution, successive approximation, kernel

\textbf{Bibliography:} 30 titles.

\textbf{MSC: 45G05}

\section{Introduction}

This work is devoted to the study of questions of existence, uniqueness, as well as the construction of an approximate solution to the following class of nonlinear integral equations on the positive half-line with an integral operator of Hammerstein type:
\begin{equation}\label{Khachatryan1}
f(x)=\int\limits_0^\infty K(x,t)G(f(t))dt,\quad x\in\mathbb{R}^+:=[0,+\infty)
\end{equation}
with respect to the desired measurable, non-negative and bounded function $f$ on $\mathbb{R}^+.$ In equations \eqref{Khachatryan1} the kernel $K$ is defined on the set $\mathbb{R}^+\times\mathbb{R}^+$ and satisfies the following basic conditions:
\begin{enumerate}
  \item [a)] $K(x,t)>0,\quad (x,t)\in \mathbb{R}^+\times\mathbb{R}^+,\quad \sup\limits_{x\in\mathbb{R}^+}\int\limits_0^\infty K(x,t)dt=1,$
  \item [b)] $\gamma(x):=1-\int\limits_0^\infty K(x,t)dt\not\equiv0,\quad x\in\mathbb{R}^+,\quad \gamma\in L_1^0(\mathbb{R}^+),$
\end{enumerate}
where $L_1^0(\mathbb{R}^+)$ is the space of integrable functions on the set $\mathbb{R}^+$ having zero limit at infinity,
\begin{enumerate}
	\item [c)] $K\in C(\mathbb{R}^+\times\mathbb{R}^+)$ and there exist $\lambda^*(t),$ $K^*(\tau)$ with properties
	\begin{enumerate}
		\item [$c_1)$] $\lambda^*(t)>1,\quad t\in\mathbb{R}^+,\quad \lambda^*-1\in L_1^0(\mathbb{R}^+),$
		\item [$c_2)$] $K^*(\tau)>0,\,\, \tau\in\mathbb{R}:=(-\infty,+\infty),\,\, K^*\in L_1(\mathbb{R})\cap L_\infty(\mathbb{R}),\,\, \int\limits_{-\infty}^\infty |t|K^{*}(t)dt<+\infty$ such that
 $$K(x,t)\leq \lambda^*(t)K^*(x-t),\quad (x,t)\in \mathbb{R}^+\times\mathbb{R}^+.$$
\end{enumerate}
\end{enumerate}
The nonlinear function $G(u)$ is defined on the set $\mathbb{R}^+$ and has the following properties
\begin{enumerate}
  \item [1)] $G\in C(\mathbb{R}^+),\,\, y=G(u)$ increases monotonically on $\mathbb{R}^+,$
  \item [2)] $G(0)=0$ and there is a number $\eta>0$ such that $G(\eta)=\eta,$
  \item [3)] $y=G(u)$ is a concave function on the set $\mathbb{R}^+,$
  \item [4)] there exists a number $\alpha\in(0,1)$ such that for all $\sigma\in(0,1)$ and $u\in[0,\eta],$ the following inequality holds $$G(\sigma u)\geq \sigma^\alpha G(u).$$
\end{enumerate}
Equation \eqref{Khachatryan1}, with different representations of the kernel $K$ and nonlinear function $G,$ arises in many areas
of mathematical science. In particular, the equations of this nature are found in the dynamic theory
of $p$-adic open-closed strings for the scalar field of tachyons, in the kinetic theory of gases and
plasmas within the framework of a modified model of the Boltzmann equation, in the nonlinear theory
of radiative transfer in inhomogeneous media and in the mathematical theory of the geographical
spread of epidemic diseases within the framework of the Diekmann-Kaper model (see \cite{aref1}-\cite{diek9}). It should also be noted that some discrete analogs of equation \eqref{Khachatryan1} find applications in radio engineering (see \cite{bas10}, \cite{su11}).

In \cite{arb15}, \cite{arb12}, \cite{diek9}, \cite{khach16} and \cite{khach14}-\cite{khach13},  the equation \eqref{Khachatryan1} has been studied when the kernel $K$ has two properties of being dependent on the difference of its arguments and being conservative, with various conditions on $G.$ 
It is worth noticing that the kernel $K$ is called conservative if\\
 \[ K(x) > 0, \quad x \in \mathbb{R}, \quad \text{and} \quad \int^{\infty}_{-\infty} K(x) \, dx = 1.\] For kernels of the form $K(x,t)=\tilde{K}(x-t)-\tilde{K}(x+t)$, $(x,t)\in\mathbb{R}^+\times\mathbb{R}^+$, where $\tilde{K}(\tau)=\tilde{K}(-\tau)\geq0$, $\tau\in\mathbb{R}^+$, $\tilde{K}(\tau) \downarrow$ on $\mathbb{R}^+$, $\tilde{K}\in L_1(\mathbb{R})\cap L_\infty(\mathbb{R})$, $\int\limits_0^\infty \tilde{K}(\tau)d\tau=\frac{1}{2}$, $\int\limits_0^\infty \tau\tilde{K}(\tau)d\tau<+\infty$,  equation \eqref{Khachatryan1} was studied in \cite{vla2}, \cite{vla3} and \cite{khach17}-\cite{petr20}. In these studies, questions of existence, uniqueness, and asymptotic behavior on $+\infty$ of a nontrivial solution, that is nonnegative on $\mathbb{R}^+$, as well as various function spaces were mainly studied. It should be noted that nonlinear integral equations of a similar structure are found in studies of various problems for nonlinear wave equations and equations such as heat conductivity (see.\cite{Ruzhansky27}-\cite{Ruzhansky30}).

In the present work, under conditions $a)-c)$ and $1)-4),$ we will first deal with the question of constructing a nontrivial nonnegative continuous and bounded solution on $\mathbb{R}^+$. Then, we will study the asymptotic behavior at infinity of the constructed solution. After which, with an additional restriction - $K(x,t)=K(t,x),$ $(x,t)\in\mathbb{R}^+\times\mathbb{R}^+$, we
prove the uniqueness of the solution in the class of non-negative nontrivial bounded functions  on $\mathbb{R}^+$. Note that the proof of the existence theorem is constructive in nature, since in the course of constructing a limited solution to equation \eqref{Khachatryan1}, special successive approximations are given which uniformly converge to the solution of equation \eqref{Khachatryan1}.  Moreover, it turns out that in the numerical estimate for the
difference between successive approximations, the right side tends to
zero as a geometric progression. Moreover, note  that in the proof of the uniqueness of the
solution, certain geometric inequalities for concave functions and a priori integral estimates for
arbitrary non-trivial non-negative and bounded solutions are obtained. Using the results obtained from equation \eqref{Khachatryan1}, it is also possible to investigate the existence of a non-negative, non-trivial, integrable and bounded  solution on $\mathbb{R}^+$ to one class of non-linear integral equations with a monotone operator
of the Hammerstein-Nemytski type. At the end of this work, specific particular examples of the kernel $K$ and nonlinear function $G$ are given to illustrate the importance of the obtained results.

\section{Auxiliary Lemmas}

The next lemma is proved by similar reasoning as Lemmas~1 and 2 from \cite{petr20}.
\begin{lemma}
Under conditions $a)-c)$ and $1)-3)$ any non-negative and bounded function $f$ on $\mathbb{R}^{+}$  which is the solution of equation \eqref{Khachatryan1}  has the following properties
$$
I)\quad f\in C(\mathbb{R}^+),\quad II)\quad f(x)\leq\eta,\quad x\in\mathbb{R}^+.
$$
\end{lemma}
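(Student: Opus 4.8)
The plan is to handle $I)$ and $II)$ separately. Throughout, set $M:=\sup_{\mathbb R^+}f<\infty$; since $f$ is non-negative and bounded and $G\in C(\mathbb R^+)$ is increasing with $G(0)=0$, we get the uniform bound $0\le G(f(t))\le C$ for all $t\in\mathbb R^+$, where $C:=\max_{[0,M]}G$. This bound on $G(f(\cdot))$ is what drives both parts.

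For $I)$ I would fix $x_0\in\mathbb R^+$, let $x\to x_0$ with all points lying in a fixed interval $[a,b]$, and split
$$
f(x)=\int_0^N K(x,t)G(f(t))\,dt+\int_N^\infty K(x,t)G(f(t))\,dt .
$$
The first term tends to $\int_0^N K(x_0,t)G(f(t))\,dt$ as $x\to x_0$, since $K\in C(\mathbb R^+\times\mathbb R^+)$ is uniformly continuous on the compact set $[a,b]\times[0,N]$ and $G(f(\cdot))\le C$ (the difference is at most $C\,N\sup_{t\in[0,N]}|K(x,t)-K(x_0,t)|\to0$). For the tail I would invoke $c_2)$, $K(x,t)\le\lambda^*(t)K^*(x-t)$, and the decomposition $\lambda^*=1+(\lambda^*-1)$:
$$
\int_N^\infty K(x,t)G(f(t))\,dt\le C\!\left(\int_{-\infty}^{b-N}\!K^*(s)\,ds+\|K^*\|_{L_\infty(\mathbb R)}\int_N^\infty(\lambda^*(t)-1)\,dt\right),
$$
whose right-hand side is independent of $x\in[a,b]$ and tends to $0$ as $N\to\infty$, because $K^*\in L_1(\mathbb R)\cap L_\infty(\mathbb R)$ and $\lambda^*-1\in L_1^0(\mathbb R^+)$ (conditions $c_2)$, $c_1)$). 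Letting $x\to x_0$ and then $N\to\infty$ gives $f(x)\to f(x_0)$, so $f\in C(\mathbb R^+)$ (the same estimate with $C=1$ also yields continuity of $\gamma$).

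For $II)$ I would use the supremum. By monotonicity of $G$ and $\sup_x\int_0^\infty K(x,t)\,dt=1$ (condition $a)$),
$$
f(x)=\int_0^\infty K(x,t)G(f(t))\,dt\le G(M)\int_0^\infty K(x,t)\,dt\le G(M)\qquad(x\in\mathbb R^+),
$$
hence $M\le G(M)$. On the other hand, concavity of $G$ together with $G(0)=0$ makes $u\mapsto G(u)/u$ nonincreasing on $(0,\infty)$, and $G(\eta)=\eta$ then gives $G(u)\le u$ for every $u\ge\eta$. Thus $M\le G(M)$ with $M\ge\eta$ would force $G(M)=M$, hence $G(u)=u$ on the whole segment $[\eta,M]$; this borderline situation is ruled out by the remaining hypotheses on $G$ — e.g. by property $4)$, which gives $G(v)>v$ for $v\in(0,\eta)$ and hence, via the rigidity of concave functions, forbids $G$ from agreeing with the identity in a left-neighbourhood of $\eta$. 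Therefore $M\le\eta$, which is $II)$.

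I expect the main obstacle to be the tail estimate in $I)$: the bound $\sup_x\int_0^\infty K(x,t)\,dt=1$ by itself does not give uniform-in-$x$ smallness of $\int_N^\infty K(x,t)\,dt$, and it is precisely the difference-kernel majorant $c_2)$ together with the $L_1^0$-decay in $c_1)$ that supplies it. The concavity inequalities in $II)$ are elementary; the only delicate point there is the exclusion of the degenerate case in which $G$ coincides with the identity on $[\eta,M]$.
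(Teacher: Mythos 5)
The paper offers no proof of this lemma at all (it only says the lemma ``is proved by similar reasoning as Lemmas~1 and 2 from \cite{petr20}''), so your argument has to be judged on its own merits rather than against an in-paper proof. Part $I)$ is correct and complete: splitting the integral at $t=N$, using uniform continuity of $K$ on $[a,b]\times[0,N]$ for the near part, and bounding the tail by
$C\bigl(\int_{-\infty}^{b-N}K^*(s)\,ds+\|K^*\|_{L_\infty(\mathbb R)}\int_N^\infty(\lambda^*(t)-1)\,dt\bigr)$
via $c_1)$--$c_2)$ gives exactly the uniform-in-$x$ smallness that condition $a)$ alone would not supply; nothing is missing there.

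Part $II)$, however, contains a genuine mismatch with the stated hypotheses. The chain $M\le G(M)$, $G(u)/u$ nonincreasing, hence either $M\le\eta$ or $G(u)=u$ on $[\eta,M]$, is fine; but to exclude the degenerate alternative you invoke property $4)$, which is \emph{not} among the assumptions of the lemma (it is stated under $a)-c)$ and $1)-3)$ only). This is not cosmetic: $G(u)=u$ satisfies $1)-3)$ (continuous, increasing, concave, $G(0)=0$, and any $\eta>0$ works in $2)$), and for this $G$ equation \eqref{Khachatryan1} is linear and can have non-negative bounded solutions whose supremum exceeds the chosen $\eta$ — for instance with $K(x,t)=k(t-x)$, $k$ an everywhere positive probability density with positive mean and finite first moment (all of $a)-c)$ hold with $K^*(s)=k(-s)$), the function $h(x)$ equal to the probability that a random walk started at $x$ with step density $k$ stays in $[0,+\infty)$ forever is a bounded positive solution with $h(x)\to1$, while $\eta$ in $2)$ may be taken smaller than $1$. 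So no argument from $1)-3)$ alone can close your gap; the appeal to $4)$ (or to some hypothesis forcing $G(v)>v$ on $(0,\eta)$, i.e. making $\eta$ the smallest positive fixed point) is doing real work and should be stated openly — in effect you prove the lemma under $1)-4)$, which is how it is actually used in Theorems~1 and~2, but not under the literal hypotheses of its statement. Relatedly, the phrase ``rigidity of concave functions'' should be replaced by the actual chord argument: if $G(u)=u$ on $[\eta,M]$ with $M>\eta$, then concavity together with $G(0)=0$ forces $G(u)\le u$ on $(0,\eta)$, which is what contradicts $G(v)>v$ there.
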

The following key lemma also holds.
\begin{lemma}
Under conditions $a)-c), 1)-3),$ if the kernel $K(x,t)$ also has the symmetry property:
\begin{equation}\label{Khachatryan2}
K(x,t)=K(t,x),\quad (x,t)\in\mathbb{R}^+\times\mathbb{R}^+,
\end{equation}
then for any non-negative and bounded solution $f$ on $\mathbb{R}^+$ the following inclusion holds:
\begin{equation}\label{Khachatryan3}
G\circ f-f\in L_1(\mathbb{R}^+).
\end{equation}
\end{lemma}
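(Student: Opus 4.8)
The plan is to exploit the symmetry of $K$ together with the conservativity-type condition b) to integrate equation \eqref{Khachatryan1} over $\mathbb{R}^+$ and convert it into a statement about $G\circ f-f$. First I would note, via Lemma~1, that any non-negative bounded solution $f$ is continuous and satisfies $0\le f(x)\le\eta$ on $\mathbb{R}^+$; in particular, by monotonicity and property 2) of $G$, we have $0\le G(f(t))\le\eta$ as well, and since $G$ is concave with $G(0)=0$ it lies above the chord, so $G(u)\ge u$ on $[0,\eta]$, giving $G\circ f-f\ge 0$ pointwise. Thus the claimed inclusion \eqref{Khachatryan3} is equivalent to showing that $\int_0^\infty\bigl(G(f(t))-f(t)\bigr)\,dt<+\infty$.

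Next I would integrate \eqref{Khachatryan1} in $x$ over $[0,R]$ and use Fubini (justified by positivity of the integrand) to write
\begin{equation*}
\int_0^R f(x)\,dx=\int_0^\infty\left(\int_0^R K(x,t)\,dx\right)G(f(t))\,dt.
\end{equation*}
Here the symmetry \eqref{Khachatryan2} is the crucial ingredient: $\int_0^R K(x,t)\,dx=\int_0^R K(t,x)\,dx$, which for fixed $t$ increases to $\int_0^\infty K(t,x)\,dx=1-\gamma(t)$ as $R\to\infty$. Letting $R\to\infty$ and applying the monotone convergence theorem on the right, and using $f\le\eta\in L_\infty$ together with the fact that $f$ need not a priori be integrable, one obtains the formal identity
\begin{equation*}
\int_0^\infty f(x)\,dx=\int_0^\infty\bigl(1-\gamma(t)\bigr)G(f(t))\,dt=\int_0^\infty G(f(t))\,dt-\int_0^\infty\gamma(t)G(f(t))\,dt,
\end{equation*}
which rearranges to $\int_0^\infty\bigl(G(f(t))-f(t)\bigr)\,dt=\int_0^\infty\gamma(t)G(f(t))\,dt$. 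Since $\gamma\in L_1^0(\mathbb{R}^+)\subset L_1(\mathbb{R}^+)$ by condition b) and $0\le G(f(t))\le\eta$, the right-hand side is bounded by $\eta\|\gamma\|_{L_1}<+\infty$, which yields \eqref{Khachatryan3}.

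The main obstacle is making the passage to the limit rigorous without assuming in advance that $f\in L_1(\mathbb{R}^+)$ — indeed integrability of $f$ is essentially what we are trying to establish. I would handle this by working with the truncated identity at finite $R$ throughout: from $\int_0^R f(x)\,dx=\int_0^\infty\bigl(\int_0^R K(x,t)\,dx\bigr)G(f(t))\,dt$ and the bound $\int_0^R K(x,t)\,dx\le 1-\gamma(t)\le 1$, split the right side as $\int_0^\infty G(f(t))\,dt$ minus a correction and instead estimate directly: $\int_0^R\bigl(G(f(x))-f(x)\bigr)dx \le \int_0^\infty\bigl(1-\int_0^R K(x,t)\,dx\bigr)G(f(t))\,dt$, where for each fixed $t$ the factor $1-\int_0^R K(x,t)\,dx = \gamma(t)+\int_R^\infty K(t,x)\,dx$ by symmetry and condition a). One then bounds $\int_R^\infty K(t,x)\,dx$ uniformly using condition $c_2)$ via $K(t,x)\le\lambda^*(x)K^*(t-x)$ and the integrability of $K^*$ and $\lambda^*-1$, obtaining a bound of the form $\gamma(t)+(\text{something integrable in }t)$ independent of $R$; letting $R\to\infty$ by monotone convergence on the left then gives $\int_0^\infty\bigl(G(f(t))-f(t)\bigr)dt\le\int_0^\infty\gamma(t)G(f(t))\,dt\le\eta\|\gamma\|_{L_1}<+\infty$. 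This is precisely the style of a priori integral estimate the introduction promises, and it is the computational heart of the argument; the concavity inequality $G(u)\ge u$ and Lemma~1's bound $f\le\eta$ are the only other facts needed.
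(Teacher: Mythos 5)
Your overall plan---integrate \eqref{Khachatryan1} in $x$ over $[0,R]$, use the symmetry \eqref{Khachatryan2} with Fubini--Tonelli, use $G(u)\ge u$ on $[0,\eta]$ (from Lemma~1, concavity, $G(0)=0$, $G(\eta)=\eta$), and control the remainder via condition $c_2)$---is exactly the paper's strategy (the paper runs the same computation written for $\eta-f$ and $\eta-G\circ f$). The gap is in the step where you try to make the truncation rigorous. From the exact identity
\begin{equation*}
\int_0^R\bigl(G(f(x))-f(x)\bigr)\,dx=\int_0^R G(f(t))\,dt-\int_0^\infty\Bigl(\int_0^R K(x,t)\,dx\Bigr)G(f(t))\,dt,
\end{equation*}
the admissible move is to discard only the nonnegative part of the second integral over $t\in[R,\infty)$, which gives
\begin{equation*}
\int_0^R\bigl(G(f(x))-f(x)\bigr)\,dx\le\int_0^{R}\Bigl(\gamma(t)+\int_R^\infty K(t,x)\,dx\Bigr)G(f(t))\,dt .
\end{equation*}
Instead you pass to $\int_0^\infty\bigl(1-\int_0^R K(x,t)\,dx\bigr)G(f(t))\,dt$, i.e.\ you add the term $\int_R^\infty G(f(t))\,dt$, which is generically $+\infty$: for the solutions in question $f(t)\to\eta$, so $G(f(t))\not\to0$, while for fixed $R$ the factor $1-\int_0^R K(t,x)\,dx$ tends to a quantity of order one as $t\to+\infty$ (e.g.\ for $K(x,t)=K_0(x-t)-\delta K_0(x+t)$ one has $\int_0^R K(t,x)\,dx\to0$). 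For the same reason your claimed majorization ``$1-\int_0^R K(x,t)\,dx\le\gamma(t)+h(t)$ with $h\in L_1$ independent of $R$'' cannot hold: for $t>R$ the left-hand side is of order one, and even on $t\le R$ no such pointwise majorant exists, since $\sup_{R\ge t}\int_R^\infty K(t,x)\,dx=\int_t^\infty K(t,x)\,dx$, which is typically about $\tfrac12$ and not integrable in $t$. (Your opening ``formal identity'' $\int_0^\infty f=\int_0^\infty(1-\gamma)G(f)$ has the same defect: both sides are infinite and the rearrangement is $\infty-\infty$.)

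The finiteness of the correction term is not a pointwise-in-$t$ phenomenon but a double-integral one: with $t$ kept in $[0,R]$ one must bound the wedge integral $\int_0^R\int_R^\infty K(t,x)\,dx\,dt$ (the paper's $\int_0^R\int_R^\infty K(x,t)\,dt\,dx$, with the variables renamed), using $K(t,x)\le\lambda^*(x)K^*(t-x)$, the splitting $\lambda^*=(\lambda^*-1)+1$, and---crucially---the first moment condition $\int_{-\infty}^\infty|y|K^*(y)\,dy<+\infty$, which your proposal never invokes and without which the estimate fails: after a change of variables,
\begin{equation*}
\int_0^R\int_R^\infty K^*(t-x)\,dx\,dt=\int_0^R\int_z^\infty K^*(-y)\,dy\,dz\le\int_0^\infty yK^*(-y)\,dy\le\int_{-\infty}^\infty|y|K^*(y)\,dy,
\end{equation*}
while the $(\lambda^*-1)$ part is bounded by $\|\lambda^*-1\|_{L_1}\|K^*\|_{L_1}$. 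With the $t$-range restricted to $[0,R]$ and this wedge estimate inserted, your argument becomes the paper's proof verbatim, yielding the $R$-independent bound $\int_0^R(G(f)-f)\,dx\le\eta\int_0^\infty\gamma\,dx+\eta\|\lambda^*-1\|_{L_1}\|K^*\|_{L_1}+\eta\int_{-\infty}^\infty|y|K^*(y)\,dy$, and then the conclusion follows by letting $R\to+\infty$ as you indicate.
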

\begin{proof}
Let $R>0$ be an arbitrary number. Then, using conditions $a)-c),$ $ 1)-3),$ \eqref{Khachatryan2}, as well as Lemma~1, from \eqref{Khachatryan1} by virtue of Fubini’s theorem (see \cite{kol21}) we have
\begin{align*}
&0\leq \int\limits_0^R(\eta-f(x))dx=\eta \int\limits_0^R\gamma(x)dx+\int\limits_0^R\int\limits_0^\infty (\eta-G(f(t)))K(x,t)dtdx\\
&\leq \eta \int\limits_0^\infty\gamma(x)dx+\int\limits_0^R\int\limits_0^R (\eta-G(f(t)))K(x,t)dtdx+ \int\limits_0^R\int\limits_R^\infty (\eta-G(f(t)))K(x,t)dtdx \\
&\leq \eta \int\limits_0^\infty\gamma(x)dx+\int\limits_0^R(\eta-G(f(t)))\int\limits_0^\infty K(t,x)dxdt+ \int\limits_0^R\int\limits_R^\infty (\eta-G(f(t)))K(x,t)dtdx\\
&\leq \eta \int\limits_0^\infty\gamma(x)dx+\int\limits_0^R(\eta-G(f(t)))dt+ \int\limits_0^R\int\limits_R^\infty (\eta-G(f(t)))K^*(x-t)\lambda^*(t)dtdx\\
&\leq \eta \int\limits_0^\infty\gamma(x)dx+\int\limits_0^R(\eta-G(f(t)))dt+ \eta\int\limits_0^R\int\limits_R^\infty K^*(x-t)\lambda^*(t)dtdx\\
&\small{= \eta \int\limits_0^\infty\gamma(x)dx+\int\limits_0^R(\eta-G(f(t)))dt+ \eta\int\limits_0^R\int\limits_R^\infty K^*(x-t)(\lambda^*(t)-1)dtdx+ \eta\int\limits_0^R\int\limits_R^\infty K^*(x-t)dtdx}\\
&\leq\eta \int\limits_0^\infty\gamma(x)dx+\int\limits_0^R(\eta-G(f(t)))dt+ \eta\int\limits_0^R(\lambda^*(t)-1)dt \int\limits_{-\infty}^{\infty}K^*(y)dy+ \eta\int\limits_0^R\int\limits_{R-x}^\infty K^*(-y)dydx\\
&=\eta \int\limits_0^\infty\gamma(x)dx+\int\limits_0^R(\eta-G(f(t)))dt+ \eta\int\limits_0^\infty(\lambda^*(t)-1)dt \int\limits_{-\infty}^{\infty}K^*(y)dy+ \eta\int\limits_0^R\int\limits_{z}^\infty K^*(-y)dydx\\
&\leq \eta \int\limits_0^\infty\gamma(x)dx+\int\limits_0^R(\eta-G(f(t)))dt+ \eta\int\limits_0^\infty(\lambda^*(t)-1)dt \int\limits_{-\infty}^{\infty}K^*(y)dy+ \eta\int\limits_0^\infty\int\limits_{z}^\infty K^*(-y)dydx\\
&=\eta \int\limits_0^\infty\gamma(x)dx+\int\limits_0^R(\eta-G(f(t)))dt+ \eta\int\limits_0^\infty(\lambda^*(t)-1)dt \int\limits_{-\infty}^{\infty}K^*(y)dy+ \eta\int\limits_0^\infty K^*(-y)ydy\\
&\leq \eta \int\limits_0^\infty\gamma(x)dx+\int\limits_0^R(\eta-G(f(t)))dt+ \eta\int\limits_0^\infty(\lambda^*(t)-1)dt \int\limits_{-\infty}^{\infty}K^*(y)dy+ \eta\int\limits_{-\infty}^\infty |y| K^*(y)dy,
\end{align*}
whence it follows that
\begin{align*}
&\int\limits_0^R(G(f(x))-f(x))dx\leq \eta \int\limits_0^\infty\gamma(x)dx+ \eta\int\limits_0^\infty(\lambda^*(t)-1)dt \int\limits_{-\infty}^{\infty}K^*(y)dy+ \eta\int\limits_{-\infty}^\infty |y| K^*(y)dy.
\end{align*}
Tending the number $R\rightarrow+\infty$ to infinity in the last inequality, we get
\begin{equation}\label{Khachatryan4}
\int\limits_0^\infty(G(f(x))-f(x))dx\leq\eta\int\limits_0^\infty\gamma(x)dx+ \eta\int\limits_0^\infty(\lambda^*(t)-1)dt\int\limits_{-\infty}^{\infty}K^*(y)dy+ \eta\int\limits_{-\infty}^\infty|y|K^*(y)dy.
\end{equation}
In addition, since $0\leq f(x)\leq\eta,$ $x\in\mathbb{R}^+,$ it immediately follows from properties $1)-3)$ that
\begin{equation}\label{Khachatryan5}
G(f(x))\geq f(x),\quad x\in\mathbb{R}^+.
\end{equation}
From \eqref{Khachatryan4} and \eqref{Khachatryan5}, we come to the completion of the proof.
\end{proof}
Below, using Lemmas~1 and 2, under one additional condition on the function $f(x)$ we prove that $\eta-f\in L_1^0(\mathbb{R}^+).$

The following Lemma is true
\begin{lemma}
Under the conditions of Lemma~2, if there is a number $r>0$ such that \linebreak$\inf\limits_{x\geq r}f(x)>0,$ then $\eta-f\in L_1^0(\mathbb{R}^+).$
\end{lemma}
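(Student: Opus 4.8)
The plan is to verify the two defining properties of $L_1^0(\mathbb{R}^+)$ in turn: first that $\eta-f\in L_1(\mathbb{R}^+)$, and then that $(\eta-f)(x)\to0$ as $x\to+\infty$. Put $m:=\inf_{x\ge r}f(x)$, so $m>0$ by hypothesis, and by Lemma~1 we have $m\le f(x)\le\eta$ for $x\ge r$ and $0\le f(x)\le\eta$ on $[0,r]$. The starting point for both parts is the identity obtained from \eqref{Khachatryan1} and the definition of $\gamma$,
\begin{equation*}
\eta-f(x)=\eta\gamma(x)+\int_0^\infty\bigl(\eta-G(f(t))\bigr)K(x,t)\,dt,\qquad x\in\mathbb{R}^+,
\end{equation*}
where, by $1)$–$3)$ and $0\le f\le\eta$, one has $0\le\eta-G(f(t))\le\eta-f(t)$ pointwise.

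For the integrability, consider $P(u):=G(u)-u$ on $[0,\eta]$. By $1)$ and $3)$ it is continuous, concave, nonnegative (this is \eqref{Khachatryan5}) and vanishes at $\eta$; moreover $G(m)>m$, since with $\sigma:=m/\eta\in(0,1)$ condition $4)$ gives $G(m)=G(\sigma\eta)\ge\sigma^\alpha\eta>\sigma\eta=m$. As a concave function lies above each of its chords, comparison of $P$ on $[m,\eta]$ with the chord through $(m,P(m))$ and $(\eta,0)$ yields
\begin{equation*}
\eta-u\le\frac{\eta-m}{\,G(m)-m\,}\bigl(G(u)-u\bigr),\qquad u\in[m,\eta].
\end{equation*}
Setting $u=f(x)$ for $x\ge r$, integrating, and invoking Lemma~2,
\begin{equation*}
\int_0^\infty(\eta-f(x))\,dx\le\eta r+\frac{\eta-m}{\,G(m)-m\,}\int_0^\infty\bigl(G(f(x))-f(x)\bigr)\,dx<+\infty .
\end{equation*}

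For the decay, in the identity above $\eta\gamma(x)\to0$ by $b)$. For the integral term, set $\psi:=(\eta-G\circ f)\lambda^*$ on $\mathbb{R}^+$, extended by $0$ to all of $\mathbb{R}$. Then $\psi\in L_1(\mathbb{R})$: writing $\psi=(\eta-G\circ f)+(\eta-G\circ f)(\lambda^*-1)$, the first term is in $L_1(\mathbb{R}^+)$ because $0\le\eta-G\circ f\le\eta-f$ and $\eta-f\in L_1(\mathbb{R}^+)$ by the previous paragraph, while the second is dominated by $\eta(\lambda^*-1)\in L_1(\mathbb{R}^+)$ by $c_1)$. Using $K(x,t)\le\lambda^*(t)K^*(x-t)$ from $c_2)$,
\begin{equation*}
0\le\int_0^\infty\bigl(\eta-G(f(t))\bigr)K(x,t)\,dt\le\int_0^\infty\psi(t)K^*(x-t)\,dt=(\psi*K^*)(x).
\end{equation*}
Since $\psi\in L_1(\mathbb{R})$ and $K^*\in L_1(\mathbb{R})\cap L_\infty(\mathbb{R})$ by $c_2)$, Young's inequality gives $\psi*K^*\in L_1(\mathbb{R})$, and the bound $\|(\psi*K^*)(\cdot+h)-(\psi*K^*)\|_\infty\le\|K^*\|_\infty\,\|\psi(\cdot+h)-\psi\|_{L_1(\mathbb{R})}$ together with continuity of translation in $L_1$ shows that $\psi*K^*$ is uniformly continuous on $\mathbb{R}$. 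An integrable and uniformly continuous function tends to $0$ at infinity, so $(\psi*K^*)(x)\to0$ as $x\to+\infty$, whence $(\eta-f)(x)\to0$. Together with the integrability established above this gives $\eta-f\in L_1^0(\mathbb{R}^+)$.

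The part I expect to be the main obstacle is the pointwise decay $(\eta-f)(x)\to0$: the $L_1$-bound by itself does not imply it, and the route above forces one through the equation, the convolution domination by $\psi*K^*$, and the regularity of this convolution. The two points requiring care there are that $\psi$ is actually integrable — this uses the \emph{boundedness} $\eta-G\circ f\le\eta$ to absorb $\lambda^*-1$ — and that $\psi*K^*$ is uniformly continuous, which relies on $K^*\in L_\infty$. A lesser but genuine subtlety in the first part is the strict inequality $G(m)>m$, without which the geometric estimate on $[m,\eta]$ is empty; this is precisely where concavity must be supplemented by the growth condition $4)$.
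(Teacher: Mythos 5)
Your proof is correct, and its first half is essentially the paper's argument: the same chord estimate for the concave function $G(u)-u$ on $[m,\eta]$ (the paper's inequality \eqref{Khachatryan6} with $\varepsilon=m$), combined with the integral bound coming from Lemma~2, gives $\eta-f\in L_1(r,+\infty)$ and hence $\eta-f\in L_1(\mathbb{R}^+)$. Where you genuinely diverge is the decay $f(x)\to\eta$. The paper bounds $\eta-f(x)$ by $\eta\gamma(x)+\eta\int_0^\infty K^*(x-t)(\lambda^*(t)-1)\,dt+\int_{-\infty}^\infty K^*(x-t)\psi(t)\,dt$ with $\psi=\eta-f$ extended by zero, and then cites two external results: a limit relation for convolutions (using $\lambda^*-1\in L_1^0$, i.e.\ its vanishing at infinity) and Lemma~5 of \cite{arb23} for the convolution of an $L_1\cap L_\infty$ function with $K^*$. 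You instead keep $\lambda^*$ attached to $\eta-G\circ f$, dominate the integral term by a single convolution $\psi*K^*$ with $\psi=(\eta-G\circ f)\lambda^*\in L_1$, and prove the vanishing yourself via Young's inequality plus uniform continuity (from $K^*\in L_\infty$ and continuity of translation in $L_1$) and the standard fact that an integrable, uniformly continuous function tends to zero. This buys a self-contained argument that does not even use $\lambda^*(t)\to1$, only $\lambda^*-1\in L_1$; the paper's route is shorter on the page but rests on the quoted lemmas.

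One fine point: to get the strict inequality $G(m)>m$ you invoke condition $4)$, which is not formally among the hypotheses of Lemma~3 (``the conditions of Lemma~2'' are $a)$--$c)$, $1)$--$3)$ and the symmetry \eqref{Khachatryan2}). The paper asserts $G(\varepsilon)>\varepsilon$ from ``the nonlinearity properties of $G$'' alone; under $1)$--$3)$ this holds unless $G$ coincides with the identity on $[0,\eta]$ (concavity forces $G\equiv u$ on $[0,\eta]$ if equality occurs at an interior point), so either one excludes that degenerate case or, as you do, uses $4)$, which is anyway in force where the lemma is applied (Theorem~1). This is a hypothesis-bookkeeping issue, not a gap in the argument itself.
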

\begin{proof}
 First we prove that $\eta-f\in L_1(\mathbb{R}^+).$ Since $f\in C(\mathbb{R}^+)$ (see Lemma~1), then it is enough to prove that $\eta-f\in L_1(r,+\infty).$ Let us denote by $\varepsilon:=\inf\limits_{x\geq r}f(x)>0.$ Then, taking into account properties $1)-3)$ of the nonlinearity of $G,$ we will
have (see Fig. 1):
 $$\eta-G(f(x))\leq \frac{\eta-G(\varepsilon)}{\eta-\varepsilon}(\eta-f(x)),\quad x\geq r,$$
 from which it immediately follows that
\begin{equation}\label{Khachatryan6}
G(f(x))-f(x)\geq \frac{G(\varepsilon)-\varepsilon}{\eta-\varepsilon}(\eta-f(x)),\quad x\geq r
\end{equation}
Taking into account inequalities \eqref{Khachatryan4}, \eqref{Khachatryan5} and \eqref{Khachatryan6} we obtain
\begin{align*}
&	\frac{G(\varepsilon)-\varepsilon}{\eta-\varepsilon}\int\limits_r^\infty (\eta-f(x))dx\\ &\le\eta\int\limits_0^\infty\gamma(x)dx+ \eta\int\limits_0^\infty(\lambda^*(t)-1)dt\int\limits_{-\infty}^{\infty}K^*(y)dy+ \eta\int\limits_{-\infty}^\infty|y|K^*(y)dy.
	\end{align*}
Since $f(x)\leq\eta$ and $f(x)\not\equiv\eta$ (since $\gamma(x)\not\equiv0$), then $\varepsilon\in(0,\eta).$ Furthermore, from the nonlinearity properties of $G$ we
obtain that $G(\varepsilon)>\varepsilon.$ Thus, from the integral inequality obtained above we come to the estimate:
\begin{equation}\label{Khachatryan7}
\begin{array}{c}
\displaystyle\int\limits_r^\infty (\eta-f(x))dx\leq \\ \displaystyle\leq\frac{(\eta-\varepsilon)\eta}{G(\varepsilon)-\varepsilon}\left(\int\limits_0^\infty\gamma(x)dx+ \int\limits_0^\infty(\lambda^*(t)-1)dt\int\limits_{-\infty}^{\infty}K^*(y)dy+ \int\limits_{-\infty}^\infty|y|K^*(y)dy\right),
\end{array}
\end{equation}
whence it follows that $\eta-f\in L_1(r,+\infty).$ Let us now make sure that $\lim\limits_{x\rightarrow+\infty}f(x)=\eta.$ Indeed, taking into account conditions $a)-c),$ $1)-3),$ inequality \eqref{Khachatryan5}, from equation \eqref{Khachatryan1} we arrive
to the following simple estimates:
\begin{equation}\label{Khachatryan8}
\begin{array}{c}
\displaystyle 0\leq\eta-f(x)\leq \eta\gamma(x)+\int\limits_0^\infty K(x,t)(\eta-f(t))dt\\
\displaystyle\leq \eta\gamma(x)+\int\limits_0^\infty\lambda^*(t)K^*(x-t)(\eta-f(t))dt \\
\displaystyle \leq \eta\gamma(x)+\eta\int\limits_0^\infty K^*(x-t)(\lambda^*(t)-1)dt+\int\limits_0^\infty K^*(x-t)(\eta-f(t))dt\\
\displaystyle=\eta\gamma(x)+\eta\int\limits_0^\infty K^*(x-t)(\lambda^*(t)-1)dt+\int\limits_{-\infty}^\infty K^*(x-t)\psi(t)dt,\quad x\in\mathbb{R}^+,
\end{array}
\end{equation}
where
\begin{equation}\label{Khachatryan9}
\psi(t):=\left\{
           \begin{array}{ll}
             \eta-f(t), & t\in\mathbb{R}^+, \\
             0, & t<0.
           \end{array}
         \right.
\end{equation}
Since $\lim\limits_{t\rightarrow+\infty}\lambda^*(t)=1,$ $K^*\in L_1(\mathbb{R}),$ then due to the known limit relation in convolution operations (see \cite{gev22})
 we will have
\begin{equation}\label{Khachatryan10}
\lim\limits_{x\rightarrow+\infty}\int\limits_0^\infty K^*(x-t)(\lambda^*(t)-1)dt= \lim\limits_{t\rightarrow+\infty}(\lambda^*(t)-1)\int\limits_{-\infty}^\infty K^*(y)dy=0.
\end{equation}
On the other hand, since $K^*\in L_1(\mathbb{R})\cap L_\infty(\mathbb{R}),$ $\psi\in L_1(\mathbb{R})\cap L_\infty(\mathbb{R}),$ then according to Lemma~5 from \cite{arb23} we have
\begin{equation}\label{Khachatryan11}
\lim\limits_{x\rightarrow+\infty}\int\limits_{-\infty}^\infty K^*(x-t)\psi(t)dt=0.
\end{equation}
Since $\lim\limits_{x\rightarrow+\infty}\gamma(x)=0,$ then from \eqref{Khachatryan8} due to \eqref{Khachatryan10} and \eqref{Khachatryan11} we obtain that $\lim\limits_{x\rightarrow+\infty}f(x)=\eta.$ Thus the lemma is proved.
\end{proof}

\begin{figure}[h!]
  \centering
  \includegraphics[width=5in]{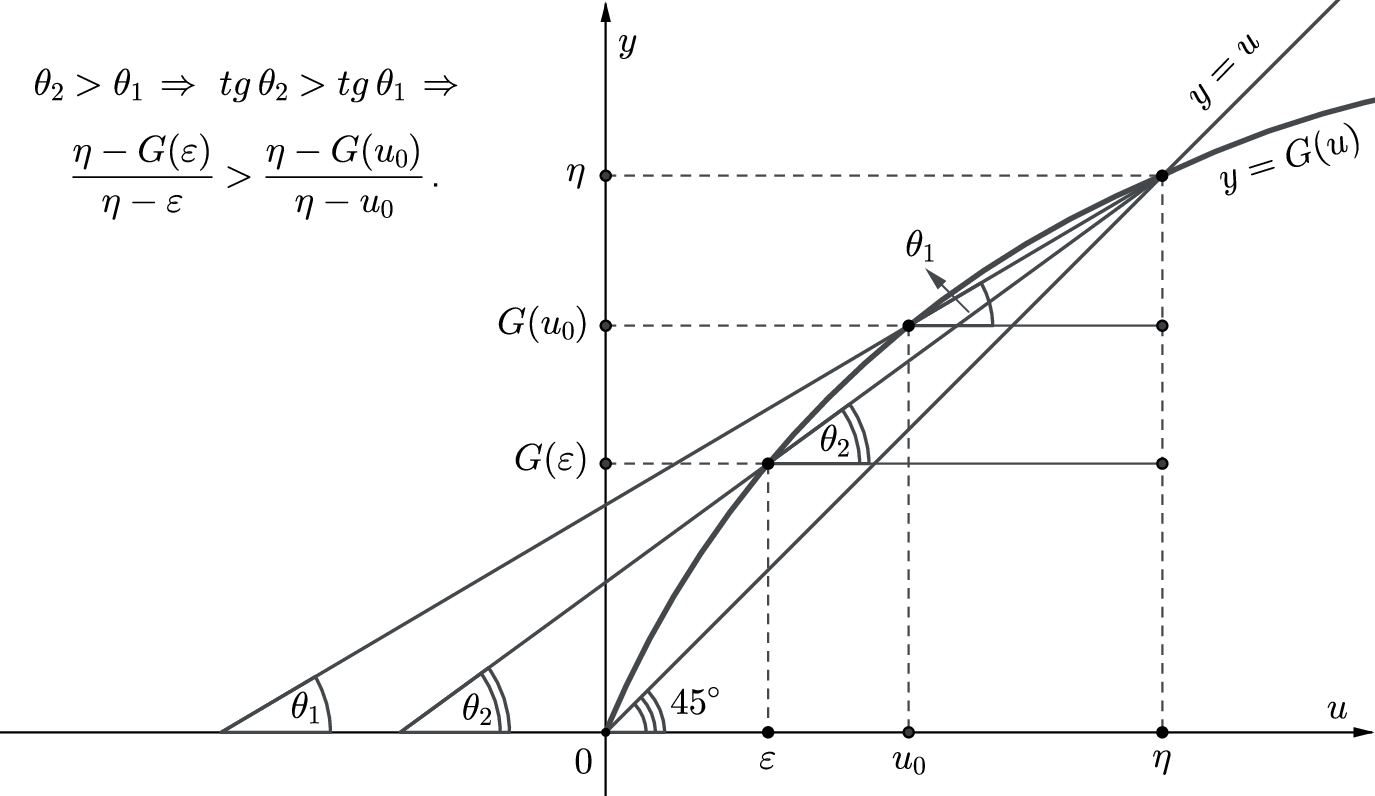}\\
  \caption{}
\end{figure}

\section{The existence of a bounded solution to the equation \eqref{Khachatryan1}}
We have the following theorem
\begin{theorem}
	Equation \eqref{Khachatryan1} under conditions $a)-c)$ and $1)-4)$  has a positive bounded continuous solution $f^*(x)$  on $\mathbb{R}^+$  and $\eta-f^*\in L_1^0(\mathbb{R}^+).$ Moreover successive approximations
	\begin{equation}\label{Khachatryan12}
		\begin{array}{c}
			\displaystyle f_{n+1}(x)=\int\limits_0^\infty K(x,t)G(f_n(t))dt,\quad x\in\mathbb{R}^+,\\
			\displaystyle f_0(x)\equiv\eta,\quad n=0,1,2,\ldots
		\end{array}
	\end{equation}
	uniformly converge to the solution $f^*(x).$ In addition, there exists a number $\sigma_0\in(0,1)$ such that
	\begin{equation}\label{Khachatryan13}
		|f_n(x)-f_{n+1}(x)|\leq \eta\alpha^{n-1}\ln\frac{1}{\sigma_0},\quad n=1,2,\ldots,\quad x\in\mathbb{R}^+,
	\end{equation}
	where the number $\alpha\in(0,1)$ is defined in condition $4).$
\end{theorem}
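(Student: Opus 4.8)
The plan is to run the monotone successive approximations \eqref{Khachatryan12} from the supersolution $f_0\equiv\eta$ and to read off all three assertions from the behaviour of the ratios $f_{n+1}/f_n$.

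\textbf{Step 1: the iterates decrease to a positive continuous limit.} Since $G(\eta)=\eta$ and $\gamma\ge 0$, $f_1(x)=\eta\int_0^\infty K(x,t)\,dt=\eta(1-\gamma(x))$, so $0<f_1\le\eta=f_0$ (positivity from $K>0$, $\gamma<1$). As $G$ is non-decreasing and $K\ge 0$, induction gives $\eta=f_0\ge f_1\ge f_2\ge\cdots\ge 0$, hence a pointwise limit $f^*=\lim_n f_n$ with $0\le f^*\le\eta$. Concavity and $G(\eta)=\eta$ force $G(u)\ge u>0$ for $u>0$, so each integrand $K(x,t)G(f_{n-1}(t))$ is strictly positive and $f_n>0$ on $\mathbb R^+$ for every $n$. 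Finally each $f_n\in C(\mathbb R^+)$ (and $\gamma\in C(\mathbb R^+)$): the domination $K(x,t)\le\lambda^*(t)K^*(x-t)$ together with condition $c)$ makes $\int_R^\infty K(x,t)\,dt\to 0$ uniformly on compact $x$-sets, and then continuity propagates from $f_{n-1}$ (and $K$) to $f_n$ exactly as in Lemma~1.

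\textbf{Step 2: a self-improving lower bound on the ratios.} Because $\gamma\in C(\mathbb R^+)$, $0\le\gamma<1$ and $\gamma(x)\to 0$, the number $\sigma_0:=1-\sup_{x\ge 0}\gamma(x)$ lies in $(0,1)$ ($\sigma_0<1$ uses $\gamma\not\equiv 0$). Set $\mu_n:=\inf_{x\ge 0}\dfrac{f_{n+1}(x)}{f_n(x)}\in(0,1]$; by Step~1 this is well defined, and $\mu_0=\inf_x(1-\gamma(x))=\sigma_0$. For $n\ge 1$, from $f_n(t)\ge\mu_{n-1}f_{n-1}(t)$, $f_{n-1}(t)\le\eta$, monotonicity of $G$ and condition $4)$ we obtain $G(f_n(t))\ge\mu_{n-1}^{\alpha}G(f_{n-1}(t))$; multiplying by $K(x,t)$ and integrating,
\[
f_{n+1}(x)\ge\mu_{n-1}^{\alpha}\,f_n(x)\quad(x\in\mathbb R^+),\qquad\text{hence}\qquad\mu_n\ge\mu_{n-1}^{\alpha}.
\]
Iterating, $\mu_n\ge\sigma_0^{\,\alpha^n}$ for all $n\ge 0$, and telescoping the ratios,
\[
f_{n+1}(x)\ge f_0(x)\prod_{k=0}^{n}\mu_k\ge\eta\,\sigma_0^{\,1+\alpha+\cdots+\alpha^n}\ge\eta\,\sigma_0^{\,1/(1-\alpha)}>0,
\]
so $f^*(x)\ge\eta\,\sigma_0^{1/(1-\alpha)}>0$: the constructed solution is positive (in particular nontrivial).

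\textbf{Step 3: estimate \eqref{Khachatryan13}, uniform convergence, the equation, and $\eta-f^*\in L_1^0$.} For $n\ge 1$, using $f_n\le\eta$, $f_{n+1}/f_n\ge\mu_n\ge\sigma_0^{\alpha^n}$ and the elementary bound $1-e^{-y}\le y$ with $y=\alpha^n\ln(1/\sigma_0)\ge 0$,
\[
0\le f_n(x)-f_{n+1}(x)\le\eta(1-\mu_n)\le\eta\bigl(1-\sigma_0^{\alpha^n}\bigr)\le\eta\,\alpha^n\ln\tfrac{1}{\sigma_0}\le\eta\,\alpha^{n-1}\ln\tfrac{1}{\sigma_0},
\]
which is \eqref{Khachatryan13}. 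The right side being summable, $\{f_n\}$ is uniformly Cauchy, so its continuous uniform limit is $f^*$; passing to the limit in \eqref{Khachatryan12} (legitimate since $f_n\to f^*$ uniformly, $G$ is uniformly continuous on $[0,\eta]$ and $\sup_x\int_0^\infty K(x,t)\,dt\le 1$) shows $f^*$ solves \eqref{Khachatryan1}. For the final inclusion, $\varepsilon_0:=\inf_{x\ge 0}f^*(x)\ge\eta\,\sigma_0^{1/(1-\alpha)}>0$ by Step~2, so the hypothesis of Lemma~3 holds with a \emph{global} positive lower bound; rerunning the a priori integral estimate of Lemma~2 for $f^*$ together with the chord inequality \eqref{Khachatryan6} (with $\varepsilon=\varepsilon_0$) gives $\eta-f^*\in L_1(\mathbb R^+)$, the boundary terms being absorbed by $\int_{-\infty}^\infty|t|K^*(t)\,dt<\infty$ and $\lambda^*-1\in L_1^0$, and then $\lim_{x\to\infty}f^*(x)=\eta$ follows exactly as in \eqref{Khachatryan8}--\eqref{Khachatryan11} with $\psi$ the zero-extension of $\eta-f^*$ (which lies in $L_1(\mathbb R)\cap L_\infty(\mathbb R)$). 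Thus $\eta-f^*\in L_1^0(\mathbb R^+)$.

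\textbf{Main obstacle.} Without condition $4)$ the scheme still converges, but possibly to the trivial solution $f\equiv 0$; the whole substance is Step~2, namely squeezing from the single hypothesis $4)$ a lower bound on the ratios $f_{n+1}/f_n$ that improves geometrically in $n$ (the recursion $\mu_n\ge\mu_{n-1}^{\alpha}$). This one estimate simultaneously keeps the iterates uniformly bounded away from zero and yields the geometric-progression rate in \eqref{Khachatryan13}. The other sensitive point is Step~3's last assertion: the membership $\eta-f^*\in L_1^0$ hinges on the \emph{uniform} positivity $\varepsilon_0>0$ of the constructed solution — which makes $G(\eta)-G(f^*)\le\kappa(\eta-f^*)$ with $\kappa=(\eta-G(\varepsilon_0))/(\eta-\varepsilon_0)<1$ — and on the first-moment condition in $c_2)$ to tame the boundary contributions in the a priori estimate.
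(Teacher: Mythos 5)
Your proposal is correct and follows the same overall skeleton as the paper (monotone iteration from $f_0\equiv\eta$, condition $4)$ converting a ratio bound into the geometric estimate, limit passage, then Lemmas 2--3 for $\eta-f^*\in L_1$), but the key step is organized differently. The paper obtains its constant $\sigma_0$ by studying $B(x)=f_2(x)/f_1(x)$: it first proves $\lim_{x\to+\infty}f_n(x)=\eta$ for every $n$ (via the convolution limit relations \eqref{Khachatryan10}--\eqref{Khachatryan11}), deduces $B(x)\to1$, and combines this with Weierstrass on a compact interval to get $\sigma_0=\inf B>0$; it then iterates $\sigma_0^{\alpha^{n-1}}f_n\le f_{n+1}\le f_n$. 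You instead take $\sigma_0=\inf_x f_1/f_0=1-\sup_x\gamma(x)$, which is positive by the more elementary observation that $\gamma$ is continuous, pointwise $<1$ and tends to $0$, and you encode the iteration as the recursion $\mu_n\ge\mu_{n-1}^{\alpha}$ for the ratio infima. This buys two things the paper does not state: a uniform lower bound $f^*\ge\eta\,\sigma_0^{1/(1-\alpha)}$, which makes positivity immediate and lets you invoke Lemma~3 with a global infimum rather than via the paper's detour through $f^*(x)\to\eta$ and the bound $f^*\ge\eta/2$ for large $x$; and a marginally sharper exponent $\alpha^n$ in \eqref{Khachatryan13} before relaxing to $\alpha^{n-1}$. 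One shared caveat: like the paper's own proof, your $L_1$ inclusion rests on the a priori estimate of Lemma~2, which uses the symmetry \eqref{Khachatryan2} even though it is not listed among the hypotheses of Theorem~1; this is an inherited feature of the paper, not a defect specific to your argument.
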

\begin{proof}
	First, note that by induction on $n$ it is easy to verify the following facts:
	\begin{equation}\label{Khachatryan14}
		f_n(x)\downarrow\quad\mbox{on}\quad n,\quad x\in\mathbb{R}^+
	\end{equation}
	\begin{equation}\label{Khachatryan15}
		f_n\in C(\mathbb{R}^+),\quad n=0,1,2,\ldots.
	\end{equation}
	The above statements \eqref{Khachatryan14} and \eqref{Khachatryan15} are easily obtained if conditions $a)-c)$ and $1), 2)$ are taken into
	account. Now we can verify by induction that
	\begin{equation}\label{Khachatryan16}
		\lim\limits_{x\rightarrow+\infty}f_n(x)=\eta,\quad n=0,1,2,\ldots.
	\end{equation}
	In case when $n=0,$ limit relation  \eqref{Khachatryan16} immediately follows from the definition of the zero approximation in iterations of \eqref{Khachatryan12}. Suppose that for some natural $n,$  we have $\lim\limits_{x\rightarrow+\infty}f_n(x)=\eta.$ Then, by virtue of \eqref{Khachatryan14}, \eqref{Khachatryan10} and conditions $a)-c),$ $1), \quad \text{and} \quad 2)$ from \eqref{Khachatryan12} we will have
\begin{align*}
		& 0\leq\eta-f_{n+1}(x)\leq \eta\gamma(x)+\int\limits_0^\infty K(x,t)(\eta-f_n(t))dt\\ &\leq\eta\gamma(x)+\int\limits_0^\infty\lambda^*(t)K^*(x-t)(\eta-f_n(t))dt \\
		& \leq \eta\gamma(x)+\eta\int\limits_0^\infty K^*(x-t)(\lambda^*(t)-1)dt+\int\limits_0^\infty K^*(x-t)(\eta-f_n(t))dt\rightarrow0,\,\,\,\, x\rightarrow+\infty,
\end{align*}
	whence it follows that $\lim\limits_{x\rightarrow+\infty}f_{n+1}(x)=\eta.$ Hence, the relation \eqref{Khachatryan16} is proved.
	
	Since $K(x,t)>0,$ $(x,t)\in\mathbb{R}^+\times\mathbb{R}^+,$ $G(0)=0$ and $y=G(u) \uparrow$ on $\mathbb{R}^+,$ one can easily verify by mathematical induction that, 
	\begin{equation}\label{Khachatryan17}
		f_n(x)>0,\quad n=0,1,2,\ldots,\quad x\in\mathbb{R}^+.
	\end{equation}
	Consider the function
	\begin{equation}\label{Khachatryan18}
		B(x):=\frac{f_2(x)}{f_1(x)},\quad x\in\mathbb{R}^+.
	\end{equation}
	Firstly, from \eqref{Khachatryan14}, \eqref{Khachatryan15} and \eqref{Khachatryan17} it immediately follows that
	\begin{equation}\label{Khachatryan19}
		B\in C(\mathbb{R}^+),
	\end{equation}
	\begin{equation}\label{Khachatryan20}
		0<B(x)\leq1, \quad x\in\mathbb{R}^+.
	\end{equation}
	On the other hand, due to \eqref{Khachatryan16} we have
	\begin{equation}\label{Khachatryan21}
		\lim\limits_{x\rightarrow+\infty} B(x)=\frac{f_2(+\infty)}{f_1(+\infty)}=1.
	\end{equation}
	From \eqref{Khachatryan20} and \eqref{Khachatryan21} we conclude that there is a number $r_0>0$ such that for $x\geq r_0$ the following inequality from below holds:
	\begin{equation}\label{Khachatryan22}
		B(x)\geq \frac{1}{2}.
	\end{equation}
	On the interval $[0,r_0]$ for the function $B(x)$ all conditions of the Weierstrass theorem are satisfied.  Therefore, taking into account \eqref{Khachatryan20} we can say that there is a point $x_0\in[0,r_0]$ such that
	\begin{equation}\label{Khachatryan23}
		\min\limits_{x\in[0,r_0]}B(x)=B(x_0)>0.
	\end{equation}
	Taking into account \eqref{Khachatryan22} and \eqref{Khachatryan23} we conclude that
	\begin{equation}\label{Khachatryan24}
		\sigma_0:=\inf\limits_{x\in\mathbb{R}^+}B(x)\geq \min\left\{\frac{1}{2}, B(x_0)\right\}>0.
	\end{equation}
	So from \eqref{Khachatryan14}, \eqref{Khachatryan20} and \eqref{Khachatryan24} we obtain the following inequalities
	\begin{equation}\label{Khachatryan25}
		\sigma_0 f_1(t)\leq f_2(t)\leq f_1(t),\quad t\in \mathbb{R}^+.
	\end{equation}
	From \eqref{Khachatryan25} due to condition $4),$ it follows that
	\begin{equation}\label{Khachatryan26}
		\sigma_0^\alpha G(f_1(t))\leq G(\sigma_0 f_1(t))\leq G(f_2(t))\leq G(f_1(t)),\quad t\in \mathbb{R}^+.
	\end{equation}
	Let us multiply both sides of \eqref{Khachatryan26} by the function $K(x,t)$ and integrate the resulting inequality over $t$ in the range from $0$ to $+\infty.$ So by virtue of \eqref{Khachatryan12}, we obtain
	\begin{equation}\label{Khachatryan27}
		\sigma_0^\alpha f_2(x)\leq f_3(x)\leq f_2(x),\quad x\in\mathbb{R}^+.
	\end{equation}
	From \eqref{Khachatryan27} and condition $4),$ it follows that
	\begin{equation}\label{Khachatryan28}
		\sigma_0^{\alpha^2} G(f_2(t))\leq G(\sigma_0^\alpha f_2(t))\leq G(f_3(t))\leq G(f_2(t)),\quad t\in \mathbb{R}^+.
	\end{equation}
	Repeating the above process again we get
	$$\sigma_0^{\alpha^2} f_3(x)\leq f_4(x)\leq f_3(x),\quad x\in\mathbb{R}^+.$$
	Continuing these arguments for an arbitrary natural number $n,$ we obtain the following inequalities
	$$\sigma_0^{\alpha^{n-1}} f_n(x)\leq f_{n+1}(x)\leq f_n(x),\quad x\in\mathbb{R}^+.$$
	Hence using \eqref{Khachatryan14}, we obtain
	\begin{equation}\label{Khachatryan29}
		0\leq f_n(x)-f_{n+1}(x)\leq (1-\sigma_0^{\alpha^{n-1}})f_n(x)\leq \eta (1-\sigma_0^{\alpha^{n-1}}),\, n=1,2,\ldots,\, x\in\mathbb{R}^+.
	\end{equation}
	Since $\lim\limits_{n\rightarrow\infty}\sigma_0^{\alpha^{n-1}}=1,$ then from \eqref{Khachatryan29} it follows the uniform convergence of the sequence of continuous functions $\{f_n(x)\}_{n=1}^\infty$  on $\mathbb{R}^+$ 
	\begin{equation}\label{Khachatryan30}
		\lim\limits_{n\rightarrow\infty}f_n(x)=f^*(x),
	\end{equation}
	and
	\begin{equation}\label{Khachatryan31}
		0\leq f^*(x)\leq\eta,\quad x\in\mathbb{R}^+.
	\end{equation}
	Using B. Levy's limit theorem (see \cite{kol21}) due to \eqref{Khachatryan14}, \eqref{Khachatryan17} and conditions $a)-c),$ $1)-3)$ we conclude that $f^*(x)$ satisfies the nonlinear integral equation \eqref{Khachatryan1}. From Lemma~1 it follows that
	\begin{equation}\label{Khachatryan32}
		f^*\in C(\mathbb{R}^+).
	\end{equation}
	Let us now make sure that there exists
	\begin{equation}\label{Khachatryan33}
		\lim\limits_{x\rightarrow+\infty}f^*(x)=\eta.
	\end{equation}
Based on the well-known fact from mathematical analysis, the following inequality holds
	\begin{equation}\label{Khachatryan34}
		1-\sigma_0^{\alpha^{n-1}}\leq \alpha^{n-1}\ln\frac{1}{\sigma_0},\quad n=1,2,\ldots,\quad \sigma_0\in(0,1),\quad \alpha\in(0,1).
	\end{equation}
	From \eqref{Khachatryan34} and \eqref{Khachatryan29}, according to the Weierstrass theorem on majorant number series, it follows that the functional series
	\begin{equation}\label{Khachatryan35}
		\sum\limits_{n=1}^\infty (f_n(x)-f_{n+1}(x))
	\end{equation}
	converges uniformly in $x\in\mathbb{R}^+.$ Therefore, taking into account the relations \eqref{Khachatryan16} and \eqref{Khachatryan30}, we have
	\begin{align*}
		\lim\limits_{x\rightarrow+\infty}f^*(x)&=\lim\limits_{x\rightarrow+\infty} \lim\limits_{N\rightarrow+\infty}\left(f_0(x)+\sum\limits_{n=0}^N(f_{n+1}(x)-f_n(x))\right)\\
	&=\lim\limits_{x\rightarrow+\infty} \left(f_0(x)+\sum\limits_{n=0}^\infty(f_{n+1}(x)-f_n(x))\right)\\
	&=\lim\limits_{x\rightarrow+\infty}f_0(x)+ \sum\limits_{n=0}^\infty(\lim\limits_{x\rightarrow+\infty}f_{n+1}(x)-\lim\limits_{x\rightarrow+\infty}f_n(x))\\
	&=\eta.
	\end{align*}
	Since $\gamma(x)\not\equiv0,$ $x\in\mathbb{R}^+,$ then $f(x)\not\equiv\eta.$ From \eqref{Khachatryan33} it follows that $f(x)\not\equiv0.$ Below we prove that
	\begin{equation}\label{Khachatryan36}
		0<f^*(x)<\eta,\quad x\in\mathbb{R}^+.
	\end{equation}
	Indeed, by virtue of \eqref{Khachatryan31}, \eqref{Khachatryan32}, \eqref{Khachatryan33} there is a number $\delta>0$ such that for $x\geq\delta$ the estimation from below holds:
	\begin{equation}\label{Khachatryan37}
		f^*(x)\geq\frac{\eta}{2}.
	\end{equation}
	Therefore, taking into account conditions $a), 1)$ and $2)$ from \eqref{Khachatryan1} we get
	\begin{equation}\label{Khachatryan38}
		f^*(x)\geq G\left(\frac{\eta}{2}\right)\int\limits_\delta^\infty K(x,t)dt>0,\quad x\in\mathbb{R}^+.
	\end{equation}
	On the other hand, considering conditions $a), b), 1), 2),$ relations \eqref{Khachatryan12}, and \eqref{Khachatryan13} one can see from \eqref{Khachatryan1} that
	\begin{equation}\label{Khachatryan39}
		\eta-f^*(x)\geq \int\limits_0^\infty K(x,t)(\eta-G(f^*(t)))dt,\quad x\in\mathbb{R}^+.
	\end{equation}
	Since $f^*(x)\not\equiv\eta,$ $x\in\mathbb{R}^+,$ $f^*(x)\leq\eta,$ $x\in\mathbb{R}^+,$ $f^*\in C(\mathbb{R}^+),$ then there exists a point $x_0>0$ and a number $\delta_0\in(0,x_0)$ such that
	\begin{equation}\label{Khachatryan40}
		\beta_0:=\inf\limits_{x\in(x_0-\delta_0,x_0+\delta_0)}(\eta-G(f^*(x)))>0,
	\end{equation}
	because $y=G(u)\uparrow$ on $\mathbb{R}^+,$ $G(0)=0,$ $G(\eta)=\eta,$ $G\in C(\mathbb{R}^+).$
	
	Taking into consideration \eqref{Khachatryan40}, from \eqref{Khachatryan39} we obtain
	$$\eta-f^*(x)\geq \int\limits_{x_0-\delta_0}^{x_0+\delta_0}K(x,t)(\eta-G(f^*(t)))dt\geq \beta_0 \int\limits_{x_0-\delta_0}^{x_0+\delta_0} K(x,t)dt>0.$$
	whence it follows that
	$$f^*(x)\leq\eta-\beta_0\int\limits_{x_0-\delta_0}^{x_0+\delta_0} K(x,t)dt<\eta,\quad x\in\mathbb{R}^+.$$
	Taking \eqref{Khachatryan37} into account, by Lemma~3 we also obtain that $\eta-f^*\in L_1(\mathbb{R}^+).$
	
	Thus the theorem is completely proved.
\end{proof}
\begin{Remark}
It should be noted that in the particular case when $G(u)=u^\alpha,$ \linebreak$u\in\mathbb{R}^+,$ $\alpha\in(0,1)$, and the kernel $K(x,t)$ has the form like $k(x-t)-k(x+t),$ for $(x,t)\in\mathbb{R}^+\times\mathbb{R}^+$
and represented in the form of a Gaussian distribution, the indicated construction of a nonnegative,
non-trivial and bounded on $\mathbb{R}^+$ solution of equation \eqref{Khachatryan1} was first used in the work \cite{vla3}.
\end{Remark}

\section{Uniqueness of solution of equation \eqref{Khachatryan1}}

In this section we will discuss the question of the uniqueness of the solution to the integral equation \eqref{Khachatryan1} under the additional restriction \eqref{Khachatryan2} on the kernel $K(x,t).$

The following theorem holds
\begin{theorem}
Under conditions $a)-c),$ $1)-4)$ and \eqref{Khachatryan2}, equation \eqref{Khachatryan1} has a unique solution in the class of non-negative nontrivial bounded functions on $\mathbb{R}^+.$
\end{theorem}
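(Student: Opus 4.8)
The plan is to prove uniqueness by showing that any two non-negative, non-trivial, bounded solutions $f$ and $g$ of \eqref{Khachatryan1} coincide; the argument has an \emph{a priori} part describing a single solution and a short \emph{comparison} part built on condition 4). First the preliminaries. By Lemma~1, $f\in C(\mathbb{R}^{+})$ and $0\le f\le\eta$. Since $G$ is concave with $G(0)=0$, $G(\eta)=\eta$, the chord inequality forces $G(u)>u>0$ for $u\in(0,\eta)$; as $f\not\equiv 0$ is continuous we have $f\ge c_{0}>0$ on some interval $[a,b]$, hence $f(x)=\int_{0}^{\infty}K(x,t)G(f(t))\,dt\ge G(c_{0})\int_{a}^{b}K(x,t)\,dt>0$ for every $x$ (using $K>0$), so $0<f\le\eta$ on $\mathbb{R}^{+}$. (One also gets $f\le f^{*}$ by comparing with the iterations \eqref{Khachatryan12}: $f\le\eta=f_{0}$, and monotonicity of the operator propagates $f\le f_{n}$ to the limit.)

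The core step is the \emph{a priori claim} $\lim_{x\to+\infty}f(x)=\eta$. I would first establish a dichotomy. Put $\ell:=\liminf_{x\to+\infty}f(x)$ and choose $x_{n}\to+\infty$ with $f(x_{n})\to\ell$. Using $\sup_{x}\int_{0}^{\infty}K(x,t)\,dt=1$, $\gamma\in L_{1}^{0}(\mathbb{R}^{+})$, the bound $K(x,t)\le\lambda^{*}(t)K^{*}(x-t)$ and the convolution limit \eqref{Khachatryan10}, one checks that the kernel mass escapes to infinity: for each fixed $M$, $\int_{0}^{M}K(x_{n},t)\,dt\to0$ while $\int_{M}^{\infty}K(x_{n},t)\,dt\to1$. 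Inserting this into the equation together with $G(f(t))\ge f(t)$, and then letting $n\to\infty$ and $M\to+\infty$, gives $\ell\ge G(\ell)$; combined with $G(\ell)\ge\ell$ this yields $G(\ell)=\ell$, so $\ell\in\{0,\eta\}$, and $\ell=\eta$ forces $f\to\eta$ (since $\limsup f\le\eta$). It remains to exclude $\ell=0$. For this I would use the a priori integral estimate $\int_{0}^{\infty}\bigl(G(f(x))-f(x)\bigr)\,dx<+\infty$ of Lemma~2 — which is exactly where the symmetry \eqref{Khachatryan2} enters — together with the geometric consequence of condition 4) obtained by taking $u=\eta$: namely $G(v)\ge\eta^{1-\alpha}v^{\alpha}$ on $[0,\eta]$, so $G(v)-v\ge\eta^{1-\alpha}v^{\alpha}-v$ vanishes only at $v=0$ and $v=\eta$ and is of order $v^{\alpha}$ near $0$; in particular $G'(0^{+})=+\infty$, i.e. for any $K_{0}>0$ there is $u_{0}>0$ with $G(v)\ge K_{0}v$ on $[0,u_{0}]$. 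If $\ell=0$ the sets $\{x:f(x)<\varepsilon\}$ have infinite Lebesgue measure for every $\varepsilon>0$ (again from the equation and the mass-escape property); substituting this, via Fubini and the symmetry \eqref{Khachatryan2}, into $\int_{0}^{\infty}(G(f)-f)\,dx$ and exploiting $G(v)\ge K_{0}v$ on small values with $K_{0}$ as large as we wish, I expect to obtain a divergent lower bound, contradicting Lemma~2. Hence $\ell=\eta$, and then Lemma~3 gives $\eta-f\in L_{1}^{0}(\mathbb{R}^{+})$.

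Granting $\lim_{x\to+\infty}f(x)=\lim_{x\to+\infty}g(x)=\eta$ and the strict positivity of $f,g$ on $\mathbb{R}^{+}$, the conclusion is quick. The ratio $f/g$ is continuous and positive on $\mathbb{R}^{+}$ and tends to $1$ at infinity, hence $\sigma:=\inf_{x\in\mathbb{R}^{+}}f(x)/g(x)\in(0,+\infty)$. If $\sigma<1$, then $f(t)\ge\sigma g(t)$ with $\sigma\in(0,1)$, $g(t)\in[0,\eta]$, so condition 4) gives $G(f(t))\ge G(\sigma g(t))\ge\sigma^{\alpha}G(g(t))$; multiplying by $K(x,t)$ and integrating yields $f(x)\ge\sigma^{\alpha}g(x)$ for all $x$, whence $\sigma\ge\sigma^{\alpha}$ — impossible for $\sigma\in(0,1)$, $\alpha\in(0,1)$. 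Therefore $\sigma\ge1$, i.e. $f\ge g$ on $\mathbb{R}^{+}$; interchanging $f$ and $g$ gives $g\ge f$, so $f\equiv g$.

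The step I expect to be the real obstacle is excluding $\ell=0$, i.e. ruling out a spurious non-trivial positive solution that decays to $0$ along some sequence: the dichotomy $\ell\in\{0,\eta\}$ and the comparison argument are soft, and essentially all the technical weight lies in converting the finiteness from Lemma~2, the blow-up $G'(0^{+})=+\infty$ coming from condition 4), and the symmetry \eqref{Khachatryan2} into a genuine quantitative contradiction; everything else reduces to the kind of geometric inequalities for concave functions already used in the proofs of Lemmas~2 and~3.
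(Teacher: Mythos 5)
Your closing comparison step is correct and attractive: if two continuous, strictly positive solutions both tend to $\eta$ at infinity, then $\sigma:=\inf f/g>0$, and condition $4)$ forces $\sigma\ge\sigma^{\alpha}$, hence $\sigma\ge1$ and, by symmetry of the roles, $f\equiv g$. But the whole proposal hinges on the a priori claim that \emph{every} non-negative non-trivial bounded solution tends to $\eta$, and the decisive part of that claim — excluding $\ell:=\liminf_{x\to+\infty}f(x)=0$ — is exactly where you have a genuine gap, as you yourself suspect. The route you sketch cannot work as stated: even granting that the sets $\{x:f(x)<\varepsilon\}$ have infinite Lebesgue measure, this does not make $\int_0^\infty\bigl(G(f(x))-f(x)\bigr)dx$ diverge, because $G(f)-f$ is small precisely where $f$ is small; indeed $0\le G(f)-f\le G(f)\le\eta^{1-\alpha}f^{\alpha}$ by condition $4)$ with $u=\eta$, so over an infinite-measure set on which $f$ decays (nothing you have rules out, say, $f(x)\le e^{-x}$ there) the integral can perfectly well converge. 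The inequality $G(v)\ge K_0 v$ for small $v$ gives $G(f)-f\ge (K_0-1)f$, but with no positive lower bound on $f$ on that set this produces no divergent minorant, hence no contradiction with Lemma~2. (The intermediate claim that the sublevel sets have infinite measure is also not established by the ``mass-escape'' observation alone; it would need quantitative non-concentration estimates for $K(x,\cdot)$.) Note that Lemma~3 assumes $\inf_{x\ge r}f(x)>0$ precisely because this is the hypothesis one cannot verify a priori for an arbitrary solution — proving $f\to\eta$ directly for an arbitrary solution is essentially as hard as uniqueness itself.

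The paper circumvents this entirely and never derives asymptotics for the second solution $\tilde f$. It shows $0<\tilde f<\eta$ and $\tilde f\le f^*$ (comparison with the iterations \eqref{Khachatryan12}), then multiplies the identity $f^*(x)-\tilde f(x)=\int_0^\infty K(x,t)\bigl(G(f^*(t))-G(\tilde f(t))\bigr)dt$ by $G(\tilde f(x))-\tilde f(x)$ and integrates over $\mathbb{R}^+$; the symmetry \eqref{Khachatryan2} enters through Lemma~2 (which supplies the $L_1$ bounds legitimizing Fubini) and through the interchange of the order of integration, after which Jensen's inequality for the convex inverse $Q=G^{-1}$ gives $\int_0^\infty K(t,x)\tilde f(x)dx\ge Q(\tilde f(t))$. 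A strict comparison of chord slopes of the concave $G$ on the set $\Gamma=\{\tilde f<f^*\}$ then contradicts the resulting integral inequality, so $\Gamma$ is empty. Your $\sigma\ge\sigma^{\alpha}$ ratio argument would be a shorter ending than this machinery, but only after the asymptotic behaviour of an arbitrary solution is secured — and that step is missing from your proof.
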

\begin{proof}
Let us assume that equation \eqref{Khachatryan1}, in addition to the solution $f^*(x)$ constructed using successive
approximations  \eqref{Khachatryan12}, has another non-negative and bounded on $\mathbb{R}^+$ solution $\tilde{f}(x).$ Then, by Lemma~1
\begin{equation}\label{Khachatryan41}
\tilde{f}\in C(\mathbb{R}^+).
\end{equation}
Using conditions $1)-3),$ $a), b)$ and inclusion \eqref{Khachatryan41}, as well as repeating similar reasoning as in the proof of
Theorem~1, we can be convinced that
\begin{equation}\label{Khachatryan42}
0<\tilde{f}(x)<\eta,\quad x\in\mathbb{R}^+.
\end{equation}
Taking into account \eqref{Khachatryan42} it is easy to verify by induction on $n$ that
\begin{equation}\label{Khachatryan43}
\tilde{f}(x)\leq f_n(x),\quad n=0,1,2,\ldots,\quad x\in\mathbb{R}^+.
\end{equation}
In inequality \eqref{Khachatryan43} tending the number $n$ to infinity, we obtain
\begin{equation}\label{Khachatryan44}
\tilde{f}(x)\leq f^*(x),\quad x\in\mathbb{R}^+.
\end{equation}
Our main goal is to prove that
\begin{equation}\label{Khachatryan45}
\tilde{f}(x)=f^*(x),\quad x\in\mathbb{R}^+.
\end{equation}
Suppose that there exists $x^*\in\mathbb{R}^+$ such that $\tilde{f}(x^*)\neq f^*(x^*).$ Then, taking into account \eqref{Khachatryan41} and \eqref{Khachatryan32} we can assert that
there exists a point $\tilde{x}\in(0,+\infty)$ and a number $\tilde{\delta}\in(0,\tilde{x})$ such that
\begin{equation}\label{Khachatryan46}
\tilde{f}(x)\neq f^*(x),\quad x\in(\tilde{x}-\tilde{\delta},\tilde{x}+\tilde{\delta}).
\end{equation}
From \eqref{Khachatryan44} and \eqref{Khachatryan46} it immediately follows that
\begin{equation}\label{Khachatryan47}
\tilde{f}(x)< f^*(x),\quad x\in(\tilde{x}-\tilde{\delta},\tilde{x}+\tilde{\delta}).
\end{equation}
Consider the following measurable set:
\begin{equation}\label{Khachatryan48}
\Gamma:=\{x\in\mathbb{R}^+:\tilde{f}(x)< f^*(x)\}.
\end{equation}
From \eqref{Khachatryan47} it follows that $\Gamma\neq\emptyset$ and $mes\Gamma\geq2\tilde{\delta}>0.$ Let us prove that
\begin{equation}\label{Khachatryan49}
\chi_1(x):=(G(\tilde{f}(x))-\tilde{f}(x))\int\limits_0^\infty K(x,t)(G(f^*(t))-G(\tilde{f}(t)))dt\in L_1(\mathbb{R}^+),
\end{equation}
\begin{equation}\label{Khachatryan50}
\chi_2(x):=(G(f^*(x))-G(\tilde{f}(x)))\int\limits_0^\infty K(x,t)(G(\tilde{f}(t))-\tilde{f}(t))dt\in L_1(\mathbb{R}^+).
\end{equation}
By virtue of $a),$ \eqref{Khachatryan42}, \eqref{Khachatryan36} and Lemma~2, we have
\begin{equation}\label{Khachatryan51}
0\leq \chi_1(x)\leq\eta \int\limits_0^\infty K(x,t)dt(G(\tilde{f}(x))-\tilde{f}(x))\leq \eta (G(\tilde{f}(x))-\tilde{f}(x))\in L_1(\mathbb{R}^+),
\end{equation}
and
\begin{equation}\label{Khachatryan52}
0\leq \chi_2(x)\leq\eta \int\limits_0^\infty K(x,t)(G(\tilde{f}(t))-\tilde{f}(t))dt,\quad x\in\mathbb{R}^+.
\end{equation}
On the other hand, taking into account conditions $a),$ \eqref{Khachatryan2} and the statement of Lemma~2 for arbitrary $R>0$ by virtue of Fubini’s theorem, we obtain
$$0\leq \int\limits_0^R \int\limits_0^\infty K(x,t)(G(\tilde{f}(t))-\tilde{f}(t))dtdx=\int\limits_0^\infty (G(\tilde{f}(t))-\tilde{f}(t)) \int\limits_0^R K(x,t)dxdt\leq$$
$$\leq \int\limits_0^\infty (G(\tilde{f}(t))-\tilde{f}(t)) \int\limits_0^\infty K(t,x)dxdt\leq \int\limits_0^\infty (G(\tilde{f}(t))-\tilde{f}(t))dt<+\infty.$$
In the last inequality, letting $R\rightarrow+\infty$ we obtain that
\begin{equation}\label{Khachatryan53}
\int\limits_0^\infty K(x,t)(G(\tilde{f}(t))-\tilde{f}(t))dt\in L_1(\mathbb{R}^+).
\end{equation}
From \eqref{Khachatryan51}, \eqref{Khachatryan52} and \eqref{Khachatryan53} we arrive at inclusions \eqref{Khachatryan49} and \eqref{Khachatryan50}.

Let us now multiply both sides of the obvious equality
$$f^*(x)-\tilde{f}(x)=\int\limits_0^\infty K(x,t)(G(f^*(t))-G(\tilde{f}(t)))dt,\quad x\in\mathbb{R}^+$$
by the function $G(\tilde{f}(x))-\tilde{f}(x)$ and due to \eqref{Khachatryan49}, \eqref{Khachatryan50} we integrate the resulting equality with respect to $x$ onto $(0,+\infty).$ As a result, taking into account condition \eqref{Khachatryan2} and Fubini’s theorem, we will have
$$\int\limits_0^\infty (f^*(x)-\tilde{f}(x))(G(\tilde{f}(x))-\tilde{f}(x))dx=\int\limits_0^\infty (G(\tilde{f}(x))-\tilde{f}(x))\int\limits_0^\infty K(x,t) (G(f^*(t))-G(\tilde{f}(t)))dtdx=$$
$$=\int\limits_0^\infty(G(f^*(t))-G(\tilde{f}(t)))\int\limits_0^\infty K(x,t)(G(\tilde{f}(x))-\tilde{f}(x))dxdt=$$
$$= \int\limits_0^\infty(G(f^*(t))-G(\tilde{f}(t))) \int\limits_0^\infty K(t,x)(G(\tilde{f}(x))-\tilde{f}(x))dxdt=$$
$$=\int\limits_0^\infty(G(f^*(t))-G(\tilde{f}(t))) \left(\tilde{f}(t)-\int\limits_0^\infty K(t,x)\tilde{f}(x)dx\right)dt=:\mathcal{D}.$$
From conditions $1)-3)$ it immediately follows that the function $y=G(u)$ has an inverse on the set $\mathbb{R}^+: Q:= G^{-1},$ and $Q(0)=0, Q(\eta)=\eta,$ $Q\in C(\mathbb{R}^+),$ $y=Q(u)\uparrow$ on $\mathbb{R}^+$ and $y=Q(u)$ is convex on the set $\mathbb{R}^+.$ Since $0<G(\tilde{f}(x))<\eta,$ $x\in\mathbb{R}^+$ (because of $0<\tilde{f}(x)<\eta,$ $x\in\mathbb{R}^+$ and $G(0)=0,$ $G(\eta)=\eta,$ $y=G(u) \uparrow$ on $\mathbb{R}^+$), then according to statement
6.14.1 from the book \cite{khardi24} we have
\begin{equation}\label{Khachatryan54}
\begin{array}{c}
\displaystyle\int\limits_0^\infty K(t,x)\tilde{f}(x)dx= \int\limits_0^\infty K(t,x)Q(G(\tilde{f}(x)))dx\geq\\ \displaystyle\geq\int\limits_0^\infty K(t,x)dx Q\left(\frac{\int\limits_0^\infty K(t,x)G(\tilde{f}(x))dx}{\int\limits_0^\infty K(t,x)dx}\right),\quad t\in \mathbb{R}^+.
\end{array}
\end{equation}
On the other hand, one can easily prove that for all $u\in[0,1]$ and $v\geq0$ the following inequality holds:
\begin{equation}\label{Khachatryan55}
uQ(v)\geq Q(uv).
\end{equation}
From \eqref{Khachatryan54} and \eqref{Khachatryan55} due to condition $a)$ we obtain the inequality:
\begin{equation}\label{Khachatryan56}
\int\limits_0^\infty K(t,x)\tilde{f}(x)dx\geq Q\left(\int\limits_0^\infty K(t,x)G(\tilde{f}(x))dx\right)=Q(\tilde{f}(t)),\quad t\in \mathbb{R}^+.
\end{equation}
Therefore, taking into consideration \eqref{Khachatryan56} for the value of $\mathcal{D}$ we obtain the following inequality:
$$\mathcal{D}\leq \int\limits_0^\infty (G(f^*(t))-G(\tilde{f}(t)))(\tilde{f}(t)-Q(\tilde{f}(t)))dt.$$
Thus we arrive at the following inequality:
$$\int\limits_0^\infty (f^*(x)-\tilde{f}(x))(G(\tilde{f}(x))-\tilde{f}(x))dx\leq \int\limits_0^\infty (G(f^*(x))-G(\tilde{f}(x)))(\tilde{f}(x)-Q(\tilde{f}(x)))dx$$
or due to the definition of a measurable set $\Gamma$ and the fact that for $x\in\mathbb{R}\backslash\Gamma,$ $f^*(x)=\tilde{f}(x)$ we have
\small{\begin{equation}\label{Khachatryan57}
\int\limits_{\Gamma} (f^*(x)-\tilde{f}(x))\left(G(\tilde{f}(x))-G(Q(\tilde{f}(x)))-\frac{G(f^*(x))-G(\tilde{f}(x))}{f^*(x)-\tilde{f}(x)}
(\tilde{f}(x)-Q(\tilde{f}(x))) \right)dx\leq0.
\end{equation}}
Since $0<\tilde{f}(x)<\eta,$ $x\in\mathbb{R}^+,$ then from the properties of the function $Q$ listed above it immediately follows that
\begin{equation}\label{Khachatryan58}
Q(\tilde{f}(x))<\tilde{f}(x), \quad x\in\mathbb{R}^+.
\end{equation}
Taking into account \eqref{Khachatryan58} inequality \eqref{Khachatryan57} can be rewritten in the following form
\small{\begin{equation}\label{Khachatryan59}
\int\limits_{\Gamma} (f^*(x)-\tilde{f}(x))(\tilde{f}(x)-Q(\tilde{f}(x)))\left(\frac{G(\tilde{f}(x))-G(Q(\tilde{f}(x)))}{\tilde{f}(x)-Q(\tilde{f}(x))}-
\frac{G(f^*(x))-G(\tilde{f}(x))}{f^*(x)-\tilde{f}(x)}\right)dx\leq0.
\end{equation}}
Let us now note that, due to conditions $1)-3)$ for all points $x$ from the measurable set $\Gamma$ the following strict inequality holds (see Fig. 2):
\begin{equation}\label{Khachatryan60}
\frac{G(\tilde{f}(x))-G(Q(\tilde{f}(x)))}{\tilde{f}(x)-Q(\tilde{f}(x))}>
\frac{G(f^*(x))-G(\tilde{f}(x))}{f^*(x)-\tilde{f}(x)},\quad x\in\Gamma.
\end{equation}
From \eqref{Khachatryan60}, \eqref{Khachatryan48} and \eqref{Khachatryan58} to \eqref{Khachatryan59} we arrive at a contradiction. Therefore $\tilde{f}(x)=f^*(x),$ $x\in\mathbb{R}^+.$ Thus, the theorem is proved.
\end{proof}

\begin{figure}[!h]
  \centering
  \includegraphics[width=5in]{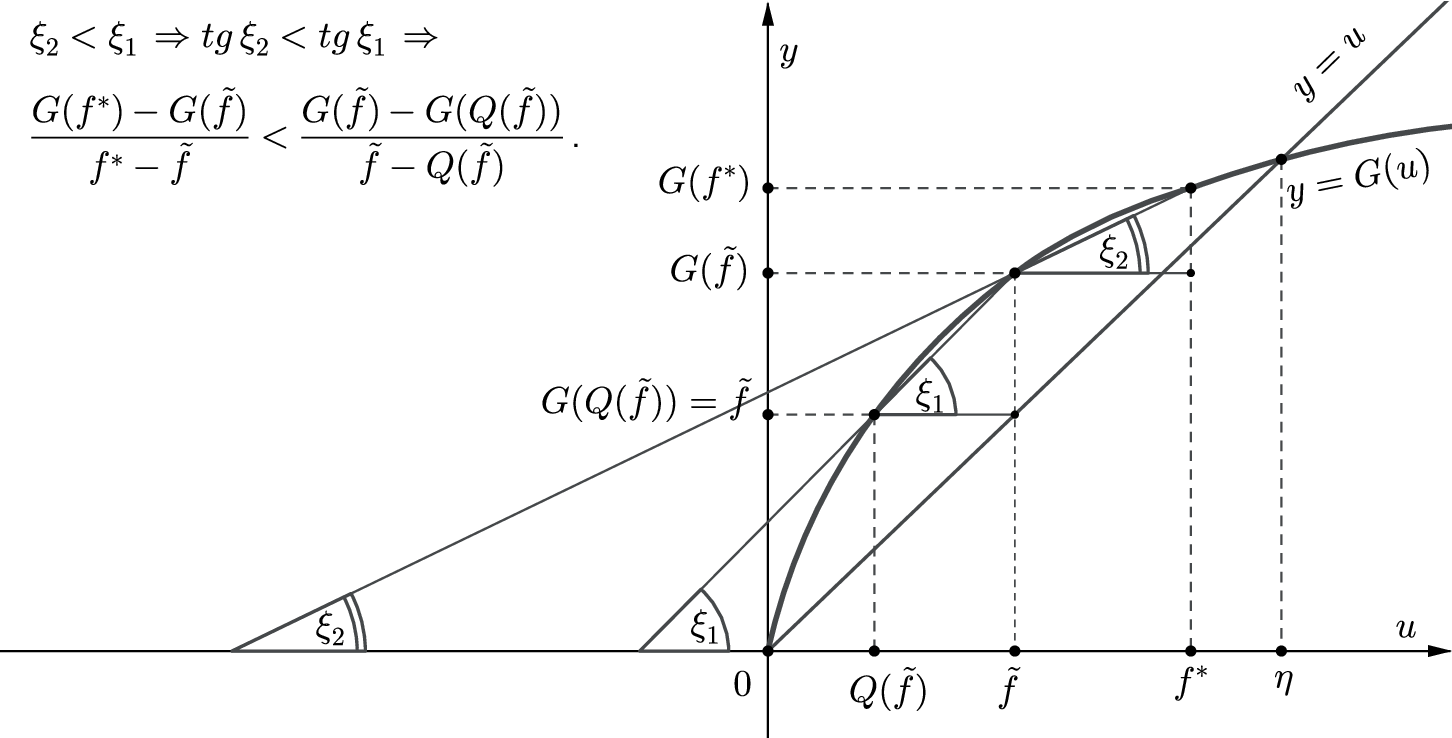}\\
  \caption{}
\end{figure}

\begin{Remark}
It should be noted that Theorem~2 proved above generalizes and complements the uniqueness
theorem of work \cite{petr20} for kernels $K$ depending on the sum and difference of arguments.
\end{Remark}

\section{Summable and bounded solution of one class of
nonlinear integral equations with a monotone
Hammerstein - Nemytsky type operator on the positive
half-line}
Consider the following class of nonlinear integral equations on the semi-axis:
\begin{equation}\label{Khachatryan61}
	\Phi(x)=G_0(x,\Phi(x))+\int\limits_0^\infty K(x,t)G_1(t,\Phi(t))dt,\quad x\in\mathbb{R}^+
\end{equation}
with respect to the unknown measurable non-negative and bounded function $\Phi(x)$ on $\mathbb{R}^+.$ In equation \eqref{Khachatryan61}, kernel $K$ has properties $a)-c),$ and nonlinear functions $G_0$ and $G_1$ defined on the set $\mathbb{R}^+\times\mathbb{R}^+$
satisfy the following \textquotedblleft criticality\textquotedblright condition
\begin{equation}\label{Khachatryan62}
	G_0(x,0)=G_1(x,0)=0,\quad x\in\mathbb{R}^+,
\end{equation}
having the following properties
\begin{enumerate}
	\item [$a_1)$] there exists a number $\xi\in(0,\eta)$ such that
	$$G_0(x,\xi\gamma(x))\geq\xi\gamma(x),\quad x\in\mathbb{R}^+,$$ and
	$$G_0(x,\eta)\leq\eta\gamma(x),\quad x\in\mathbb{R}^+,$$ where the function $\gamma(x)$ is defined in condition $b),$
	\item [$a_2)$] for any fixed $x\in\mathbb{R}^+$ functions $y=G_j(x,u)\uparrow$ in $u$ on the interval $[0,\eta],\, j=0,1.$
	\item [$a_3)$] double inequality takes place
	$$0\leq G_1(x,u)\leq\eta-G(\eta-u),\quad x\in\mathbb{R}^+,\quad u\in[0,\eta],$$ where $G$ satisfies conditions $1)-4).$
	\item [$a_4)$] the functions $y=G_j(x,u),\, j=0,1$ on the set $\mathbb{R}^+\times[0,\eta]$ satisfy the Carathéodory condition with respect to the argument $u,$ i.e. for each fixed $u\in[0,\eta]$ function $G_j(x,u),\, j=0,1$ are measurable in $x$ on $\mathbb{R}^+$ and for almost all $x\in\mathbb{R}^+$ these functions are continuous on the interval $[0,\eta].$
\end{enumerate}
The following theorem holds:
\begin{theorem}
	Under conditions $a)-c), a_1)-a_4),$ and \eqref{Khachatryan62}, equation \eqref{Khachatryan61} has a non-negative non-trivial integrable bounded
	solution on $\mathbb{R}^+$ and $\lim\limits_{x\rightarrow+\infty}\Phi(x)=0.$
\end{theorem}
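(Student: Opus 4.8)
The plan is to solve \eqref{Khachatryan61} by a monotone iteration whose zeroth term is the solution $f^{*}$ of \eqref{Khachatryan1} furnished by Theorem~1. Put $g^{*}(x):=\eta-f^{*}(x)$. Rewriting \eqref{Khachatryan1} with the help of $\int_0^\infty K(x,t)\,dt=1-\gamma(x)$ gives the identity
\[
g^{*}(x)=\eta\gamma(x)+\int_0^\infty K(x,t)\bigl(\eta-G(\eta-g^{*}(t))\bigr)\,dt,\qquad x\in\mathbb{R}^{+},
\]
and, since $G(f^{*}(t))\le G(\eta)=\eta$, the bound $f^{*}(x)\le\eta(1-\gamma(x))$, whence $\eta\gamma(x)\le g^{*}(x)<\eta$; in particular $\xi\gamma(x)\le g^{*}(x)$, where $\xi\in(0,\eta)$ is the constant of $a_1)$. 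I would then set $\Phi_0:=g^{*}$ and
\[
\Phi_{n+1}(x):=G_0(x,\Phi_n(x))+\int_0^\infty K(x,t)G_1(t,\Phi_n(t))\,dt,\qquad n=0,1,2,\dots,
\]
which by $a_4)$ (Carathéodory measurability) produces measurable functions, all with values in $[0,\eta]$, i.e. inside the domain of $G_0$ and $G_1$.

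The first step is the two-sided bound
\[
\xi\gamma(x)\le\Phi_{n+1}(x)\le\Phi_n(x)\le g^{*}(x),\qquad x\in\mathbb{R}^{+},\ n=0,1,2,\dots,
\]
established by induction. For $n=0$ one checks $\Phi_1\le\Phi_0$ using $a_1)$ (so $G_0(x,g^{*}(x))\le G_0(x,\eta)\le\eta\gamma(x)$), $a_3)$ (so $G_1(t,g^{*}(t))\le\eta-G(\eta-g^{*}(t))$) and the displayed identity for $g^{*}$; the monotonicity $a_2)$ then propagates $\Phi_{n+1}\le\Phi_n$ for all $n$. The lower bound follows from $G_1\ge0$ and $a_2)$, namely $\Phi_{n+1}(x)\ge G_0(x,\Phi_n(x))\ge G_0(x,\xi\gamma(x))\ge\xi\gamma(x)$ by $a_1)$, once $\Phi_n\ge\xi\gamma$ is known, which holds at $n=0$ by the estimate above. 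Hence $\Phi_n(x)\downarrow\Phi^{*}(x)$ pointwise, with $\xi\gamma(x)\le\Phi^{*}(x)\le\eta-f^{*}(x)$.

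It remains to pass to the limit in the iteration. For almost every fixed $x$, the Carathéodory continuity in $a_4)$ gives $G_0(x,\Phi_n(x))\to G_0(x,\Phi^{*}(x))$; and since $G_1(t,\Phi_n(t))$ decreases to $G_1(t,\Phi^{*}(t))$ pointwise (monotonicity plus a.e.\ continuity in $u$) while $0\le G_1(t,\Phi_n(t))\le\eta$, the monotone convergence theorem (or dominated convergence with majorant $\eta K(x,t)\in L_1(\mathbb{R}^{+})$ in $t$) yields $\int_0^\infty K(x,t)G_1(t,\Phi_n(t))\,dt\to\int_0^\infty K(x,t)G_1(t,\Phi^{*}(t))\,dt$. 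Thus $\Phi^{*}$ is a non-negative measurable solution of \eqref{Khachatryan61}; it is non-trivial because $\Phi^{*}\ge\xi\gamma\not\equiv0$. Finally $0\le\Phi^{*}(x)\le\eta-f^{*}(x)$ together with $\eta-f^{*}\in L_1^0(\mathbb{R}^{+})$ (Theorem~1) gives at once $\Phi^{*}\in L_1(\mathbb{R}^{+})$, $\Phi^{*}$ bounded by $\eta$, and $\lim_{x\to+\infty}\Phi^{*}(x)=0$ by squeezing.

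The main obstacle is choosing the right initial function: iterating from the constant $\eta$, as in Theorem~1, would not keep $\Phi_n$ below $\eta-f^{*}$, and one would lose integrability and decay. The point is that $a_3)$ is designed precisely so that $u\mapsto\eta-G(\eta-u)$ dominates $G_1(\cdot,u)$, which makes $\eta-f^{*}$ an invariant supersolution, while $a_1)$ supplies the invariant subsolution $\xi\gamma$; the monotone scheme is then trapped between them. A secondary technical nuisance is justifying the interchange of limit and integral under only Carathéodory (not full joint continuity) assumptions on $G_1$, which is why monotonicity in $a_2)$ is invoked, so that one may appeal to monotone convergence.
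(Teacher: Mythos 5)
Your proposal is correct and follows essentially the same route as the paper: a monotone iteration for \eqref{Khachatryan61} trapped between the subsolution $\xi\gamma$ (from condition $a_1)$) and the supersolution $\eta-f^{*}$ (from condition $a_3)$ together with Theorem~1), with the limit passage justified by monotone convergence and the Carath\'eodory condition $a_4)$, and integrability, boundedness and decay at infinity read off from $\eta-f^{*}\in L_1^0(\mathbb{R}^+)$. The only difference is cosmetic: you iterate downward from $\Phi_0=\eta-f^{*}$, whereas the paper iterates upward from $\Phi_0=\xi\gamma(x)$; both schemes stay in the same order interval $[\xi\gamma,\eta-f^{*}]$ and yield the same conclusions.
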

\begin{proof}
	Let us consider the following successive approximations for the equation \eqref{Khachatryan61}:
	\begin{equation}\label{Khachatryan63}
		\begin{array}{c}
			\displaystyle \Phi_{n+1}(x)=G_0(x,\Phi_n(x))+\int\limits_0^\infty K(x,t)G_1(t,\Phi_n(t))dt,\\
			\displaystyle \Phi_{0}(x)=\xi\gamma(x),\quad n=0,1,2,\ldots,\quad x\in\mathbb{R}^+.
			\displaystyle
		\end{array}
	\end{equation}
	First we prove that
	\begin{equation}\label{Khachatryan64}
		\Phi_n(x)\downarrow\quad\mbox{on}\quad n,\quad x\in\mathbb{R}^+,
	\end{equation}
	\begin{equation}\label{Khachatryan65}
		\Phi_n(x)\leq\eta, \quad n=0,1,2,\ldots,\quad x\in\mathbb{R}^+.
	\end{equation}
	Indeed, we first show that $\Phi_{0}(x)\leq\Phi_{1}(x)\leq\eta,$ $x\in\mathbb{R}^+.$ Given $a_1)-a_3),$ \eqref{Khachatryan62} and $a)$ from \eqref{Khachatryan63}, we have
	\begin{align*}
	&	\Phi_{1}(x)\leq G_0(x,\xi)+\int\limits_0^\infty K(x,t)G_1(t,\xi)dt\\
	&\leq G_0(x,\eta)+\int\limits_0^\infty K(x,t)G_1(t,\eta)dt\\
	&\leq \eta\gamma(x)+\eta \int\limits_0^\infty K(x,t)dt\\
	&=\eta,\quad x\in\mathbb{R}^+
	\end{align*}
	and $$\Phi_{1}(x)\geq G_0(x,\Phi_{0}(x))\geq\xi\gamma(x)=\Phi_{0}(x),\quad x\in\mathbb{R}^+.$$
	Assuming the facts $\Phi_n(x)\geq \Phi_{n-1}(x),$ $x\in\mathbb{R}^+$ and $\Phi_n(x)\leq\eta,$ $x\in\mathbb{R}^+$  for some natural number $n,$ and considering the monotonicity of the
		functions $G_j(x,u)$ with respect to $u$ for $j=0,1,$ and conditions $a_1), a_3), a),$ as well as relation \eqref{Khachatryan62}, from \eqref{Khachatryan63} we obtain $\Phi_{n+1}(x)\geq \Phi_n(x),$ $x\in\mathbb{R}^+,$ moreover,   $$\Phi_{n+1}(x)\leq G_0(x,\eta)+ \int\limits_0^\infty K(x,t)G_1(t,\eta)dt\leq \eta\gamma(x)+\eta(1-\gamma(x))=\eta,\quad x\in\mathbb{R}^+.$$
	
	Let us now verify that a more precise inequality holds
	\begin{equation}\label{Khachatryan66}
		\Phi_n(x)\leq\eta-f^*(x),\quad n=0,1,2,\ldots,\quad x\in\mathbb{R}^+,
	\end{equation}
	where $f^*(x)$ is the only solution of equation \eqref{Khachatryan1} (see Theorems~1 and 2).
	
	For $n=0,$ estimate \eqref{Khachatryan66} is obtained from the following  inequalities
	$$\Phi_{0}(x)=\xi\gamma(x)\leq\eta\gamma(x)\leq \eta\gamma(x)+\int\limits_0^\infty K(x,t)(\eta-G(f^*(t)))dt=\eta-f^*(x),\,  x\in\mathbb{R}^+.$$
	Let us assume that \eqref{Khachatryan66} holds for some $n\in\mathbb{N}.$ Then, considering the conditions $a), a_1), a_2)$ and $b),$ as well as \eqref{Khachatryan1} from \eqref{Khachatryan63} we have
	\begin{align*}
		\Phi_{n+1}(x)&\leq G_0(x,\eta-f^*(x))+ \int\limits_0^\infty K(x,t)G_1(t,\eta-f^*(t))dt\\
	&\leq G_0(x,\eta)+\int\limits_0^\infty K(x,t)(\eta-G(f^*(t)))dt\\
&\leq\eta\gamma(x)+\eta(1-\gamma(x))-\int\limits_0^\infty K(x,t)G(f^*(t))dt =\eta-f^*(x),\quad x\in\mathbb{R}^+.\\
\end{align*}
	Using conditions $a)- c)$ and $a_3)$ as well as $a_{4})$, one can easily verify by induction on $n$ that each function from the
	sequence $\{\Phi_n(x)\}_{n=1}^\infty$ is a measurable function on $\mathbb{R}^+.$ Thus, from \eqref{Khachatryan64} and \eqref{Khachatryan66} it follows that the sequence of measurable functions $\{\Phi_n(x)\}_{n=1}^\infty$ has a pointwise limit when $n\rightarrow\infty$
	$$\lim\limits_{n\rightarrow\infty}\Phi_n(x)=\Phi(x),$$
	where $\Phi(x)$ satisfies the double inequality:
	\begin{equation}\label{Khachatryan67}
		\xi\gamma(x)\leq \Phi(x)\leq\eta-f^*(x),\quad x\in\mathbb{R}^+.
	\end{equation}
	Taking into account condition $a_4)$ and using the limit theorems of B. Levy and M.A. Krasnoselsky (see \cite{kras25}) we conclude that $\Phi(x)$ satisfies equation \eqref{Khachatryan1} almost everywhere on $\mathbb{R}^+.$ From  \eqref{Khachatryan67} and Theorem~1 it follows that
	$$\Phi\in L_1^0(\mathbb{R}^+)\cap L_\infty(\mathbb{R}^+),$$
	$$\Phi(x)\not\equiv0,\quad x\in\mathbb{R}^+.$$
	Thus, the theorem is completely proved.
\end{proof}
\begin{Remark}
	The question of the uniqueness of the constructed solution to equation \eqref{Khachatryan61} in the class of
	nonnegative nontrivial and bounded functions still remains an open problem. One can only prove the
	uniqueness of equation \eqref{Khachatryan61} in the class of non-negative nontrivial and integrable functions on $\mathbb{R}^+$ if we
	additionally assume that $G_0(x,u)$ is concave in $u$ on $\mathbb{R}^+,$ and $G_1(x,u)=\eta-G(\eta-u),$ $u\in\mathbb{R}^+.$ The proof of this fact is carried out by similar reasoning as in \cite{khach26}.
\end{Remark}

\section{Examples}

At the end of the work we will give examples of the kernel $K$ and the nonlinear functions $G, G_0$ and $G_1.$ First, let's give examples of the kernel $K:$
\begin{enumerate}
  \item [A)] $K(x,t)=\mu(x,t)K_0(x-t),$ $(x,t)\in\mathbb{R}^+\times\mathbb{R}^+,$ where $0<\varepsilon_0\leq\mu(x,t)\leq1,$ $\mu(x,t)=\mu(t,x),$ $(x,t)\in\mathbb{R}^+\times\mathbb{R}^+$ ---  is a continuous function on $\mathbb{R}^+\times\mathbb{R}^+$ and
  $$\sup\limits_{t\in\mathbb{R}^+}(1-\mu(x,t))\in L_1^0(\mathbb{R}^+),$$
and the function $K_0$ has the following properties:
  \small$$0< K_0(-t)=K_0(t), t\in\mathbb{R}^+, K_0\in L_1(\mathbb{R})\cap C_M(\mathbb{R}), \int\limits_0^\infty K_0(\tau)d\tau=\frac{1}{2}, \int\limits_0^\infty y K_0(y)dy<+\infty,$$ where $C_M(\mathbb{R})$ is the space of continuous and bounded functions on the set $\mathbb{R},$
  \item [B)] $$K(x,t)=\mu(x,t)(K_0(x-t)-\delta K_0(x+t)),\,(x,t)\in\mathbb{R}^+\times\mathbb{R}^+, $$
   where the functions $\mu$ and $K_0$ satisfy the conditions of example $A),$ while $K_0(t) \downarrow$ on the set $\mathbb{R}^+,$ and $\delta\in(0,1)$ ---is a numerical parameter,
  \item [C)] $$K(x,t)=\displaystyle\frac{\lambda(x)+\lambda(t)}{2}(K_0(x-t)+\varepsilon K_0(x+t)), \, (x,t)\in\mathbb{R}^+\times\mathbb{R}^+,$$ where $\varepsilon\in(0,1)$--- is a numerical parameter, the kernel $K_0$ satisfies the conditions of Example $B),$ and $\lambda\in C(\mathbb{R}^+),$ $1-\lambda\in L_1^0(\mathbb{R}^+)$ and satisfies the following double inequality $$0<d^*:=\inf\limits_{x\in\mathbb{R}^+}\lambda(x)\leq\lambda(x)\leq1,\quad x\in\mathbb{R}^+.$$
\end{enumerate}
Let us now give examples of nonlinear functions $y=G(u);$
\begin{enumerate}
  \item [I)] $G(u)=u^\alpha,\,\, u\in\mathbb{R}^+,\,\, \alpha\in(0,1)$ ---is a numerical parameter,
  \item [II)] $G(u)=\displaystyle\frac{u^{\alpha^*}+u}{2},\,\, u\in\mathbb{R}^+,\,\, \alpha^*\in(0,1)$---is the parameter,
  \item [III)] $G(u)=\displaystyle\frac{u^{\tilde{\alpha}}+u^{\alpha^*}}{2},\,\, \tilde{\alpha},\alpha^*\in(0,1),\,\, \alpha^* >\tilde{\alpha},\,\, u\in\mathbb{R}^+.$
\end{enumerate}
Finally, the examples of functions $G_0$ and $G_1$ are given as
\begin{enumerate}
  \item [$g_1)$] $G_0(x,u)=\displaystyle\frac{2\xi\gamma(x)u}{u+\xi\gamma(x)},\,\, u\in\mathbb{R}^+,\,\,x\in\mathbb{R}^+,\,\,\xi\in\left(0,\frac{\eta}{2}\right),$
  \item [$g_2)$] $G_0(x,u)=\displaystyle\frac{2\xi\gamma(x)u}{u+\xi\gamma(x)}+\varepsilon^*(x)u^2,\,\, u\in\mathbb{R}^+,\,\,x\in\mathbb{R}^+,\,\,\xi\in\left(0,\frac{\eta}{2}\right),$
  
   and $\varepsilon^*(x)$ is a continuous function on $\mathbb{R}^+$ satisfying the double inequality
    $$0\leq \varepsilon^*(x)\leq \frac{(\eta-2\xi)\gamma(x)+\xi\gamma^2(x)}{\eta(\eta+\xi\gamma(x))},\quad x\in\mathbb{R}^+,$$
  \item [$g_3)$] $G_1(x,u)=\eta-G(\eta-u),\quad u\in\mathbb{R}^+,$
  \item [$g_4)$] $G_1(x,u)=\mathcal{L}(x)(\eta-G(\eta-u)), u\in\mathbb{R}^+, x\in\mathbb{R}^+,$ where $\mathcal{L}\in C(\mathbb{R}^+)$ and $0\leq \mathcal{L}(x)\leq1,$ $ x\in\mathbb{R}^+.$
\end{enumerate}
Let us stop in detail on examples $II), III)$ and $C).$ Let's look at example $II).$ Firstly, it is obvious that for
example $II)$ the number $\eta=1,$ $G(0)=0,$ $G(1)=1,$ $G\in C(\mathbb{R}^+).$ Since $G^\prime(u)=0.5(1+\alpha^*u^{\alpha^*-1})>0$ for $u>0,$ and $G^{\prime\prime}(u)=0.5\alpha^*(\alpha^*-1)u^{\alpha^*-2}<0$ for $u>0,$ then $y=G(u) \uparrow$ on $\mathbb{R}^+$ and $y=G(u)$ is concave on $\mathbb{R}^+.$ Let us finally check condition $4).$ For example $II)$ the
inequality in condition $4)$ takes the following form: $\exists \alpha\in(0,1)$ such that
\begin{equation}\label{Khachatryan68}
\sigma^{\alpha^*}u^{\alpha^*}+\sigma u\geq \sigma^{\alpha}u^{\alpha^*}+\sigma^\alpha u,\quad u\in[0,1],\quad \sigma\in(0,1).
\end{equation}
Since $u\in[0,1],$ then to prove inequality \eqref{Khachatryan68} it is enough to check that there exists $\alpha\in(0,1)$ such that
\begin{equation}\label{Khachatryan69}
\frac{\sigma^{\alpha^*}-\sigma^\alpha}{\sigma^\alpha-\sigma}\geq1,\quad \sigma\in(0,1).
\end{equation}
Taking for example $\alpha=0,5(1+\alpha^*)$ we have
$$\sigma^{\alpha^*}+\sigma\geq 2\sqrt{\sigma^{\alpha^*+1}}=2\sigma^{0,5(\alpha^*+1)}=2\sigma^\alpha,\quad \sigma\in(0, 1).$$
Thus we get
$$\sigma^{\alpha^*}-\sigma^\alpha\geq \sigma^\alpha-\sigma,\quad \sigma\in(0,1),$$
from which follows \eqref{Khachatryan69}. Let us now move on to example $III).$ Obviously, for example $III)$ $G(0)=0,$ $G(1)=1,$ $G\in C(\mathbb{R}^+),$  $G^\prime(u)=0.5(\tilde{\alpha}u^{\tilde{\alpha}-1}+\alpha^*u^{\alpha^*-1})>0,$ $u>0,$ $G^{\prime\prime}(u)=0.5(\tilde{\alpha}(\tilde{\alpha}-1)u^{\tilde{\alpha}-2}+\alpha^*(\alpha^*-1)u^{\alpha^*-2})<0,$ $u>0.$ Therefore, conditions $1)-3)$ are satisfied. Here, choosing the number $\alpha=0.5(\tilde{\alpha}+\alpha^*),$ $\tilde{\alpha}<\alpha^*$ we arrive at condition $4).$

Finally, let's discuss example $C).$ It's obvious that
$$K\in C(\mathbb{R}^+\times\mathbb{R}^+)\quad \mbox{and}\quad K(x,t)>0,\quad (x,t)\in\mathbb{R}^+\times\mathbb{R}^+.$$
On one hand using the properties of $K,$ one can calculate that
\begin{align}\label{Khachatryan70}
\int\limits_0^\infty K(x,t)dt& \leq \int\limits_0^\infty (K_0(x-t)+\varepsilon K_0(x+t))dt\\
	&=1-(1-\varepsilon) \int\limits_x^\infty K_0(y)dy\\
	&<1,\quad
\end{align}
for $x\in\mathbb{R}^+.$

On the other hand, by the definition of $K$ we obtain
\begin{align}\label{ineq}
	&\int\limits_0^\infty K(x,t)dt\\
	&=0.5\lambda(x)\left(1-(1-\varepsilon) \int\limits_x^\infty K_0(y)dy\right)+0.5\int\limits_0^\infty (K_0(x-t)+\varepsilon K_0(x+t))\lambda(t)dt,
\end{align}
for $x\in\mathbb{R}^+.$

As $\lim\limits_{x\rightarrow+\infty}\lambda(x)=1,$  using \eqref{Khachatryan70} and \eqref{ineq}, we get to 
\begin{equation}\label{Khachatryan71}
	\lim\limits_{x\rightarrow+\infty}\int\limits_0^\infty K(x,t)dt=1,
\end{equation}
Hence
$$\sup\limits_{x\in\mathbb{R}^+}\int\limits_0^\infty K(x,t)dt=\lim\limits_{x\rightarrow+\infty}\int\limits_0^\infty K(x,t)dt=1.$$
Now, we prove that $\gamma\in L_1(\mathbb{R}^+).$ Based on the definition of $\gamma,$ we have
\begin{align}
&\gamma(x)\notag\\
&=1-0.5\lambda(x)\left(1-(1-\varepsilon) \int\limits_x^\infty K_0(y)dy\right)-0.5\int\limits_0^\infty (K_0(x-t)+\varepsilon K_0(x+t))\lambda(t)dt\notag\\
&\leq 0.5(1-\lambda(x))+0.5(1-\varepsilon)\int\limits_x^\infty K_0(y)dy+\notag\\
&+0.5 \left(\int\limits_0^\infty (K_0(x-t)+ K_0(x+t))dt-\int\limits_0^\infty (K_0(x-t)+\varepsilon K_0(x+t))\lambda(t)dt \right)\notag\\
&\leq 0.5(1-\lambda(x))+0.5(1-\varepsilon)\int\limits_x^\infty K_0(y)dy+0.5 \int\limits_x^\infty K_0(y)dy+\notag \\
&+ 0.5 \int\limits_0^\infty K_0(x-t)(1-\lambda(t))dt\in L_1(\mathbb{R}^+),\notag\end{align}
for $\int\limits_0^\infty y K_0(y)dy<+\infty,$$\quad 1-\lambda\in L_1(\mathbb{R}^+),\quad K_0\in L_1(\mathbb{R}^+).$

The limit relation $\lim\limits_{x\rightarrow+\infty}\gamma(x)=0$ follows immediately from \eqref{Khachatryan71}. From the properties of the kernel $K_0$ we
immediately obtain that
\begin{align*}
	K(x,t)&\leq K_0(x-t)+\varepsilon K_0(x+t)\\
	&\leq (1+\varepsilon)K_0(x-t)\\
	&\leq \lambda^*(t)K^*(x-t),\,\, \text{for}~ (x,t)\in\mathbb{R}^+\times\mathbb{R}^+,
	\end{align*}
where $K^*(x)=(1+\varepsilon)K_0(x)$ for $x\in\mathbb{R},$ and $\lambda^*(t)>1$ for $t\in\mathbb{R}^+,$ as well as $\lambda^*-1\in L_1^0(\mathbb{R}^+),$ is an arbitrary function.

To complete this section, we also give examples of the functions $K_0, \lambda,$ and $\mu$ as well as  $\lambda^*.$ As examples of $K_{o},$ we have
\begin{itemize}
	\item [$\bullet$]
	\[K_0(x)=\frac{1}{\sqrt{\pi}}e^{-x^2},\quad x\in\mathbb{R},\quad\]
	or,
		\item [$\bullet$]
		\[ K_0(x)=\int\limits_a^b e^{-|x|s}d\tilde{\sigma}(s),\quad x\in\mathbb{R},
		\]
\end{itemize}

where $\tilde{\sigma}\uparrow$ on $[a,b),$ $0<a<b\leq+\infty$ and $2\int\limits_a^b\frac{1}{s}d\tilde{\sigma}(s)=1.$

As an illustration of $\lambda,$ we give
\begin{itemize}
\item[$\bullet$]
\[\lambda(x)=1-(1-d^*)e^{-x},\quad x\in\mathbb{R}^+,\]
or, 
\item[$\bullet$]
\[\lambda(x)=1-(1-d^*)(1+x^2)^{-1},\quad x\in\mathbb{R}^+.
\]
\end{itemize}
The example of $\mu$ is as follows
\begin{equation}\label{Khachatryan72}
	\mu(x,t)=\lambda(x)+\lambda(t)-\lambda(x)\lambda(t),\quad (x,t)\in\mathbb{R}^+\times\mathbb{R}^+,
\end{equation}
while here is the example of $\lambda*,$
$$\lambda^*(x)=1+\frac{e^{-x}}{x^l},\quad x\in\mathbb{R}^+,\quad l\in(0,1).$$
It is worth to note that in example \eqref{Khachatryan72}, the condition $\sup\limits_{t\in\mathbb{R}^+}(1-\mu(x,t))\in L_1^0(\mathbb{R}^+)$ holds because $\sup\limits_{t\in\mathbb{R}^+}(1-\mu(x,t))=(1-d^*)^2e^{-x}\in L_1^0(\mathbb{R}^+).$

\section{Acknowledgements}

K. Khachatryan was supported by the Higher Education and Science Committee of the Republic of Armenia, scientific project №~23RL-1A027. Z. Keyshams acknowledges the support by the Higher Education and Science Committee of the Republic of Armenia, scientific project №~10-3/23/2PostDoc-2. M. Mikaeili Nia is grateful for the support from the Higher Education and Science Committee of the Republic of Armenia, scientific project №~10-3/23/2PostDoc-1. The research work by Z. Keyshams and M. Mikaeili Nia is conducted in the framework of the ADVANCE Research Grants provided by the Foundation for Armenian Science and Technology.

\textbf{Contact information}

1) Zahra Keyshams

Yerevan State University, Alex Manoogian 1, Yerevan, 0025, Republic of Armenia

Faculty of Mathematics and Mechanics,

E-mail: zahrakeyshams7@gmail.com\\

2) Khachatur Aghavardovich Khachatryan

Yerevan State University, Alex Manoogian 1, Yerevan, 0025, Republic of Armenia

Faculty of Mathematics and Mechanics,

E-mail: khachatur.khachatryan@ysu.am\\

3) Monire Mikaeili Nia

Yerevan State University, Alex Manoogian 1, Yerevan, 0025, Republic of Armenia

Faculty of Mathematics and Mechanics,

E-mail: moniremikaeili@yahoo.com\\

\end{document}